\documentclass[11pt,a4paper,reqno]{amsart}
\usepackage[utf8]{inputenc}
\usepackage[T1]{fontenc}
\usepackage{amsmath,amssymb}
\usepackage{graphicx,cleveref,interval,bbm,microtype}
\usepackage{geometry}
\usepackage{lmodern}

\theoremstyle{plain} 
\newtheorem{thm}{Theorem}[section]
\newtheorem{prop}[thm]{Proposition}

\newtheorem{lem}[thm]{Lemma}
\newtheorem{defn}[thm]{Definition} 
\newtheorem{ass}{Assumption}
\theoremstyle{remark}
\newtheorem{rem}[thm]{Remark}

\makeatletter
\newcommand{\proofpart}[2]{%
	\par
	\addvspace{\medskipamount}%
	\noindent\emph{Step #1.} #2\par\nobreak
	%	\addvspace{\smallskipamount}%
	\@afterheading
}
\makeatother

% MYENUM
\usepackage{enumitem}
\newlist{myenum}{enumerate}{2}
\setlist[myenum]{%noitemsep,
	label=(\roman*),
	parsep = 1pt,
	itemsep = 5pt}
%\newlist{myitem}{itemize}{2}

% COMMANDS
\newcommand{\ito}{It\^{o}}
\newcommand{\wto}{\rightharpoonup}
\newcommand{\E}{\mathbb{E}}
\renewcommand{\Pr}{\mathbb{P}}
\newcommand{\R}{\mathbb{R}}
\newcommand{\bb}{\mathbf{b}}
\newcommand{\bba}{\mathbf{b}^\ast}
\newcommand{\N}{\mathbb{N}}
\newcommand{\cF}{\mathcal{F}}
\newcommand{\cO}{\mathcal{O}}
\newcommand{\norm}[1]{\left\lVert#1\right\rVert}
\newcommand{\abs}[1]{\left\lvert#1\right\rvert}
\newcommand{\inner}[1]{\langle#1\rangle}
\DeclareMathOperator{\sign}{sign}
\DeclareMathOperator{\dive}{div}

\title[Existence and extinction for Stratonovich gradient noise PMEs]{Existence and extinction in finite time for Stratonovich gradient noise porous media equations}

\author[M.\ Turra]{Mattia Turra}
\address{Department of Computer Science, Universit\`a degli Studi di Verona \\ Strada Le Grazie 15, I-37134 Verona, Italy.}
\email{turra.mattia@outlook.com}

\thanks{}
\keywords{Stochastic partial differential equations; Porous media equations; Multiplicative Stratonovich gradient noise; Fast diffusion; Self-organized criticality}
\subjclass[2010]{60H15, 35K55, 76S05}

\begin{document}
	
\begin{abstract}
	We study existence and uniqueness of distributional solutions to the stochastic partial differential equation $dX - \bigl( \nu \Delta X + \Delta \psi (X) \bigr) dt = \sum_{i=1}^N \langle b_i, \nabla X \rangle \circ d\beta_i$ in $\interval[open]{0}{T} \times \cO$, with $X(0) = x(\xi)$ in $\cO$ and $X = 0$ on $\interval[open]{0}{T} \times \partial \cO$.
	Moreover, we prove extinction in finite time of the solutions in the special case of fast diffusion model and of self-organized criticality model. 
\end{abstract}	

\maketitle

\section{Introduction}

In this work we consider stochastic porous media equations with Stratonovich gradient noise. 
In particular, we deal with existence and uniqueness of a solution to such kind of equations, providing also some results concerning its asymptotic behaviour.
To be precise, let $\cO \subset \R^d$ be an open, bounded set with regular boundary and $T>0$, then we consider the following stochastic partial differential equation (SPDE) in $\interval[open]{0}{T} \times \cO$,
\begin{equation}\label{eq:intro1}
dX(t,\xi) - \bigl( \nu \Delta X (t,\xi) + \Delta \psi (X(t,\xi)) \bigr) dt = \sum_{i=1}^N \langle b_i(\xi), \nabla X(t,\xi) \rangle \circ d\beta_i(t),
\end{equation}
where $\circ$ denotes that the integration is intended in the Stratonovich sense, $\nu > 0$, $\psi \colon \R \to 2^\R$ is a maximal monotone function with polynomial growth, $b_i \colon \overline{\cO} \subset \R^d \to \R^d$ are $C^2$ functions and $\beta = (\beta_i)_{i=1,\ldots, N}$ is an $N$-dimensional Brownian motion on a given probability space. 

We provide existence of a distributional solution to \cref{eq:intro1}, essentially studying problem \eqref{eq:intro1} with $\psi$ substituted by its Yosida approximation, $\psi_\lambda$, $\lambda > 0$, which, as we will see, admits a solution for all $\lambda >0$, and then showing that the associated sequence of solutions, namely the sequence of solutions of the Yosida approximation scheme related to \cref{eq:intro1}, converges to the one we are looking for, as $\lambda$ goes to zero.
This approach has been employed also in, e.g., \cite{barbu2016stochastic} to study porous media equations with multiplicative noise.

We then treat two particular examples of \cref{eq:intro1}, namely fast diffusion and self-organized criticality, and prove an asymptotic result concerning the solution to the equation in those frameworks.
As we shall see, the solution will be zero from a certain time on with positive probability.
It is worth to mention that the method we have used to obtain the aforementioned results is based on the one developed in~\cite{barbu2009finite}, where the multiplicative noise case is studied.

Perturbing a problem by a Stratonovich noise of gradient type is useful in a wide range of applications, as in the case of image processing, see \cite{sixou2014stochastic,wang2015filtered}, where it has been proved that considering those kind of perturbations improves the solution obtained by the total variation regularization.

Some recent results regarding equations with Stratonovich gradient noise have been studied in the case of $p$-Laplacian and total variation flow drift.
For instance the reader may refer to \cite{barbu2013existence} which deals with $p$-Laplace equations in the case $p>1$, to \cite{ciotir2016nonlinear} for the case with Neumann boundary conditions, to \cite{2018arXiv180307005T} for $p$-Laplace equation and total variation flow on a $d$-dimensional torus, to \cite{barbu2017stochastic} for a distributional solution when $b_i$ is divergence-free and to \cite{munteanu2018total} for the case of total variation flow with Dirichlet boundary conditions.

\subsection{Structure of the work}

The work is organized as follows. 
We begin introducing the problem and discussing the assumptions, the definition of solution to \cref{eq:intro1} and some preliminaries in \Cref{sec:framework}.
\Cref{sec:approx} is devoted to the construction of the approximating problem and its properties.
We prove the existence and uniqueness of a solution to our problem in \Cref{sec:existence}.
Extinction in finite time for the fast diffusion model is discussed in \Cref{sec:extinction}. 
Self-organized criticality model is treated in \Cref{sec:soc}.
We conclude with some final remarks and considerations in \Cref{sec:conclusion}.

\subsection{Notation}

Let $\cO \subset \R^d$ be an open and bounded set with regular boundary~$\partial \cO$. 
Then we denote by $L^p (\cO)$, $p \in \interval{0}{\infty}$, the Banach space of all $p$-summable (equivalence classes of) functions from $\cO$ to $\R$ and by $\abs{\cdot}_p$ its corresponding norm, while we indicate by $H^k(\cO)$, $k \in \N$, the Sobolev space of functions in $L^2(\cO)$ whose distributional derivatives of order less than $k$ belong to $L^2$. 
$H^1_0(\cO)$ is the set of $H^1$ function vanishing on $\partial \cO$, its corresponding norm is given by
\[ \norm{u}_1 \doteq \left( \int_{\R^d} \abs{\nabla u}_{\R^d}^2 \right)^{1/2}. \]
$H^{-1}(\cO)$ is the dual of $H^1_0(\cO)$ and its norm is denoted by $\norm{\cdot}_{-1}$, $\inner{\cdot, \cdot}_{-1}$ being its inner product.
We indicate by $\inner{\cdot,\cdot}$, or sometimes simply by $\cdot$, the scalar product on $\R^N$.

\section{Framework} \label{sec:framework}

Let $\cO \subset \R^d$ be a open and bounded set with smooth boundary $\partial \cO$.
We aim at providing existence and uniqueness of a solution to the following nonlinear SPDE with Stratonovich gradient noise for $X \colon \Omega \times \interval{0}{T} \times \cO\to \R$,
\begin{equation}\label{eq:spde strato}
\begin{cases}
\displaystyle{dX(t) - \bigl( \nu \Delta X (t) + \Delta \psi (X(t)) \bigr) dt = \sum_{i=1}^N \langle b_i, \nabla X(t) \rangle \circ d\beta_i(t)}, & \text{in } \interval[open]{0}{T} \times \cO, \\[5pt]
X(0,\xi) = x(\xi), & \text{in } \cO, \\[5pt]
X(t,\xi) = 0, & \text{on } \interval[open]{0}{T} \times \partial \cO,
\end{cases}
\end{equation}
where $\nu > 0$, $\bb = (b_1,\ldots,b_N) \colon \overline{\cO} \to \R^{N\times d}$, $\psi\colon \R \to 2^\R$ is a (possibly multivalued) map and $\beta = (\beta_i)_{i=1,\ldots,N}$ is an $N$-dimensional Brownian motion on a filtered probability space $(\Omega, \cF, (\cF_t)_{t\ge 0}, \Pr)$.
For the sake of simplicity, we will often omit to write explicitly the dependence of $X$ on $(\omega,t,\xi) \in \Omega \times \interval{0}{T} \times \cO$ as well as the dependence of $\bb$ on $\xi \in \overline{\cO}$.

\Cref{eq:spde strato} can be equivalently written as the following SPDE in the \ito\ sense
\begin{equation}\label{eq:spde ito}
\begin{cases}
\begin{aligned} 
dX(t) - \Bigl( \nu \Delta X(t) + \Delta \psi (X(t)) + \frac{1}{2} \dive & (\bba \, \bb \, \nabla X(t)) \Bigr) dt \, = \\ & = \langle \bb \nabla X(t), d\beta (t) \rangle , 
\end{aligned} 
& \text{in } \interval[open]{0}{T} \times \cO, \\[4pt]
X(0,\xi) = x(\xi), & \text{in } \cO, \\[4pt]
X(t,\xi) = 0, & \text{on } \interval[open]{0}{T}\times \partial \cO,
\end{cases}
\end{equation}
where $\bba$ is the transpose of $\bb$.

Notice that we can write
\begin{align}
\dive (\bba \bb \, \nabla X) & = \sum_{k=1}^d \frac{\partial}{\partial \xi_k} \left( \sum_{j=1}^d A_{kj} (\xi) \, \frac{\partial X(t,\xi)}{\partial \xi_j} \right),\label{eq:dive-rewritten} \\[6pt]
\bb \, \nabla X & = \sum_{i=1}^d \begin{pmatrix}
b_{1i} (\xi) \\ \vdots \\ b_{Ni} (\xi)
\end{pmatrix} \frac{\partial X(t,\xi)}{\partial \xi_i}, \notag 
\end{align}
where
\begin{equation} \label{eq:a funct}
A_{kj} (\xi) = \sum_{i=1}^N b_{ik} (\xi) \, b_{ij} (\xi), \qquad k,j = 1,\ldots,d. 
\end{equation}  

Before proceeding further we provide the following result, which will be useful in the proof of the existence of a solution to \cref{eq:spde ito}.
\begin{lem} \label{thm:dive-estimates}
	Let $\bb$ be defined as above and assume that $b_{ij} \in C^2(\overline{\cO})$. 
	Then there exist $\tilde{C} > 0$, depending on $\cO$, and $\gamma > 0$, depending on $\bb$, such that
	\begin{equation*}
	\abs{(-\Delta)^{-1} \dive(\bb^\ast \bb \nabla u)}_2 \le \tilde{C} \gamma \abs{u}_2, \quad \text{for every } u \in L^2(\cO).
	\end{equation*}
\end{lem}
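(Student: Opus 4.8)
The estimate is purely a deterministic elliptic-regularity bound, so the plan is to unwind the $H^{-1}$-norm of $(-\Delta)^{-1}\dive(\bba\bb\nabla u)$ by pairing against test functions and integrating by parts, exploiting the $C^2$ regularity of the $b_{ij}$. Write $v = (-\Delta)^{-1}\dive(\bba\bb\nabla u) \in H^1_0(\cO)$, i.e. $v$ solves $-\Delta v = \dive(\bba\bb\nabla u)$ in the distributional sense with zero boundary data. Since $|v|_2 \le C_\cO \norm{v}_1$ by the Poincaré inequality (with $C_\cO$ depending only on $\cO$), it suffices to bound $\norm{v}_1$. Testing the equation for $v$ against $v$ itself gives $\norm{v}_1^2 = \int_\cO \nabla v \cdot \nabla v = \langle \dive(\bba\bb\nabla u), v\rangle_{H^{-1},H^1_0}$, which I cannot directly estimate because $u$ need not be in $H^1_0$; so instead I would move the derivatives onto $v$.

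Concretely, using \eqref{eq:dive-rewritten} and \eqref{eq:a funct}, for smooth $u$ (then extend by density) one has
\begin{align*}
\norm{v}_1^2 &= \int_\cO \sum_{k,j} \frac{\partial}{\partial \xi_k}\Bigl(A_{kj}\frac{\partial u}{\partial \xi_j}\Bigr) v \\
&= -\int_\cO \sum_{k,j} A_{kj}\,\frac{\partial u}{\partial \xi_j}\,\frac{\partial v}{\partial \xi_k} \\
&= \int_\cO \sum_{k,j} \frac{\partial}{\partial \xi_j}\Bigl(A_{kj}\,\frac{\partial v}{\partial \xi_k}\Bigr) u,
\end{align*}
where the boundary terms vanish because $v \in H^1_0(\cO)$ (first integration by parts) and, for the second, because we integrate by parts in $\xi_j$ moving the derivative off $u$ onto the already-smooth quantity $A_{kj}\partial_{\xi_k} v$ — here the $C^2$ assumption on $b$ (hence $C^2$, in particular $C^1$, on $A_{kj}$) is exactly what guarantees $\partial_{\xi_j}(A_{kj}\partial_{\xi_k}v) \in L^2$, namely it equals $(\partial_{\xi_j}A_{kj})\partial_{\xi_k}v + A_{kj}\partial^2_{\xi_k\xi_j}v$. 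Applying Cauchy–Schwarz,
\[
\norm{v}_1^2 \le |u|_2 \,\Bigl| \sum_{k,j}\partial_{\xi_j}(A_{kj}\partial_{\xi_k}v)\Bigr|_2 \le |u|_2\Bigl( \norm{A}_{C^1}\norm{v}_1 + \norm{A}_{C^0}|D^2 v|_2 \Bigr).
\]
To close this I invoke interior-plus-boundary elliptic $H^2$-regularity for the Poisson problem on the smooth domain $\cO$: since $\dive(\bba\bb\nabla u)$ is only in $H^{-1}$ in general I should be a little more careful, but in fact the cleanest route is to test the equation $-\Delta v = \dive(\bba\bb\nabla u)$ against $v$ after one integration by parts, getting $\norm{v}_1^2 = -\int_\cO A_{kj}\partial_{\xi_j}u\,\partial_{\xi_k}v$; this still has a $\partial u$ I want to avoid, so the genuinely correct argument is the symmetric one: combine both integrations by parts above to land on $\norm{v}_1^2 \le \norm{A}_{C^1(\overline\cO)}\,|u|_2\,\norm{v}_{H^2}$, then use the standard a priori estimate $\norm{v}_{H^2(\cO)} \le C_\cO\, |{-}\Delta v|_2$ — but $-\Delta v$ is not in $L^2$ a priori. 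I therefore organize it as: first establish the bound for $u \in C^\infty_c(\cO)$, where everything is smooth and $\norm{v}_{H^2}\le C_\cO\norm{v}_1^{1/2}(\dots)$ via bootstrapping is legitimate, obtain $\norm{v}_1 \le C_\cO\,\norm{A}_{C^1}\,|u|_2$, hence $|v|_2 \le C_\cO^2 \norm{A}_{C^1}|u|_2$; then pass to general $u \in L^2(\cO)$ by density, since $u \mapsto (-\Delta)^{-1}\dive(\bba\bb\nabla u)$ is bounded on the dense subspace and extends continuously. Setting $\tilde C = C_\cO^2$ (depending only on $\cO$) and $\gamma = \norm{A}_{C^1(\overline\cO)}$ (depending only on $\bb$, since $A_{kj}=\sum_i b_{ik}b_{ij}$ and $b_{ij}\in C^2$) gives the claim.

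\textbf{Main obstacle.} The only real subtlety is the regularity bookkeeping: $\dive(\bba\bb\nabla u)$ sits a priori only in $H^{-1}(\cO)$ when $u\in L^2(\cO)$, so one cannot naively invoke $H^2$-elliptic estimates, and the two successive integrations by parts that shift all derivatives onto $v$ must be justified first on smooth $u$ and then transferred by a density/closability argument. Keeping track of which constant depends on $\cO$ (the Poincaré constant and the elliptic a priori constant) versus on $\bb$ (the $C^1$, equivalently $C^2$-in-$b$, norm of the coefficient matrix $A$) is the other point requiring care, but it is exactly delivered by the chain $A_{kj}\in C^1(\overline\cO)$ forced by $b_{ij}\in C^2(\overline\cO)$.
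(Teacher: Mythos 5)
There is a genuine gap at the point where you try to close the energy estimate. Testing $-\Delta v=\dive(\bba\bb\nabla u)$ against $v$ itself and integrating by parts twice correctly yields
\[
\norm{v}_1^2=\int_\cO u\sum_{k,j}\partial_{\xi_j}\bigl(A_{kj}\partial_{\xi_k}v\bigr)\,d\xi\le \abs{u}_2\Bigl(\norm{A}_{C^1}\norm{v}_1+\norm{A}_{C^0}\abs{D^2v}_2\Bigr),
\]
but the term $\abs{D^2v}_2$ on the right refers to the \emph{same} $v$ you are trying to estimate, and it is not controlled by $\norm{v}_1$: the elliptic a priori bound gives $\norm{v}_{H^2}\le C\abs{\Delta v}_2=C\abs{\dive(\bba\bb\nabla u)}_2$, which involves derivatives of $u$ and hence cannot be absorbed or bounded by $\abs{u}_2$. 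Your proposed patch --- prove it first for $u\in C^\infty_c(\cO)$ ``via bootstrapping'' and then extend by density --- does not repair this: for smooth $u$ everything is finite, but the constant you would obtain depends on $\norm{u}_{H^2}$ (through $\norm{v}_{H^2}$), not on $\abs{u}_2$, so you never establish boundedness of $u\mapsto(-\Delta)^{-1}\dive(\bba\bb\nabla u)$ on the dense subspace in the $L^2$ topology, and the density argument has nothing to extend.

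The fix is to dualize, which is what the paper does: estimate $\abs{z}_2$ with $z=(-\Delta)^{-1}\dive(\bba\bb\nabla u)$ by pairing against an arbitrary $f\in L^2(\cO)$ and introducing the \emph{auxiliary} solution $w\in H^1_0\cap H^2$ of $-\Delta w=f$. Two integrations by parts give $\inner{z,f}_2=\int_\cO u\,\dive(\bba\bb\nabla w)\,d\xi$, and now the $H^2$ regularity is invoked for $w$, where it is legitimate and quantitative: $\norm{w}_{H^2}+\norm{w}_{H^1_0}\le K\abs{f}_2$. This yields $\abs{\inner{z,f}_2}\le\tilde C\gamma\abs{u}_2\abs{f}_2$ for all $f$, hence $\abs{z}_2\le\tilde C\gamma\abs{u}_2$. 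You already have the right integration-by-parts identity and the right bookkeeping of constants ($\tilde C$ from $\cO$, $\gamma$ from the $C^1$ norm of $A$); the missing idea is simply that the second derivatives must land on a test function solving a Poisson problem with $L^2$ data, not on $z$ itself.
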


\begin{proof}
	Let $z \doteq (-\Delta)^{-1} \dive(\bb^\ast \bb \nabla u)$ in $\cO$, that is, equivalently,
	\begin{equation}\label{eq:dive-est-proof-1}
	\begin{cases}
	- \Delta z = \dive (\bb^\ast \bb \nabla u), & \text{in } \cO, \\
	z = 0, & \text{in } \partial \cO. 
	\end{cases}
	\end{equation}
	Let $f \in L^2(\cO)$ and $v \in H^1_0(\cO) \cap H^2(\cO)$ be the solution to
	\begin{equation}\label{eq:dive-est-proof-2}
	\begin{cases}
	- \Delta v = f, & \text{in } \cO, \\
	v = 0, & \text{on } \partial \cO.
	\end{cases}
	\end{equation}
	Multiplying the first equation in \eqref{eq:dive-est-proof-1} by $v$ and then integrating we have, by Green's formula and~\eqref{eq:dive-est-proof-2},
	\[ \int_\cO z  f \, d\xi = \int_\cO \dive (\bb^\ast \bb \nabla u) \, v \, d\xi = \int_\cO u \, \dive (\bb^\ast \bb \nabla v) \, d \xi, \]
	therefore, by \eqref{eq:dive-rewritten}, we get
	\[ 
	\begin{split}
	\abs{\inner{z,f}_2} & \le \abs{u}_2 \abs{\dive (\bb^\ast \bb \nabla v)}_2 \\ 
	& \le \abs{u}_2  \sum_{k,j = 1}^d \left( \abs{A_{kj}}_\infty \bigl\lvert {D_{jk}^2 v}\bigr\rvert_2 + \abs{D_k A_{kj}}_\infty \abs{D_j v}_2  \right) \\
	& \le C \gamma \abs{u}_2 \left( \norm{v}_{H^2(\cO)} + \norm{v}_{H^1_0(\cO)} \right),
	\end{split}
	\]
	where $C = C(\cO)$ and 
	\begin{equation*} \label{eq:gammab}
	\gamma = \gamma (\bb) \doteq \max \left\{ \abs{A_{kj}}_\infty + \abs{D_k A_{kj}}_\infty \colon \, k,j \in \{ 1, \ldots, d\} \right\}.
	\end{equation*} 
	By \cref{eq:dive-est-proof-2} we have
	\[ \norm{v}_{H^1_0(\cO)} + \norm{v}_{H^2(\cO)} \le K \abs{f}_2, \]
	for some $K$ depending on $\cO$, so that
	\[ \abs{\inner{z,f}_2} \le \tilde{C} \gamma \abs{u}_2 \abs{f}_2 , \quad \text{for every } f \in L^2(\cO), \]
	where $\tilde{C} = C  K$ depends on $\cO$.
	Hence $\abs{z}_2 \le \tilde{C} \gamma \abs{u}_2$, for every $u \in L^2(\cO)$, which concludes the proof.
\end{proof}

We assume that the following hypotheses on $\psi$, $\bb$ and $\nu$ hold.
\begin{ass} \label{ass: on b}
	The following hypotheses hold:
	\begin{myenum}
		\item \label{H1} $\psi \colon \R \to 2^\R$ is maximal monotone with $0 \in \psi(0)$.
		\item \label{H2} There exists $C > 0$ and $m \ge 0$ such that
		\[ \sup \left\{ \abs{\theta} \colon \theta \in \psi(r) \right\} \le C (1+\abs{r}^m), \quad \text{for every } r \in \R. \]
		Moreover, we assume that $m \le d/(d-2)$ if $d \ge 3$.
		\item \label{H3} The functions $A_{kj}$ defined in \eqref{eq:a funct} are bounded for any $k,j = 1,\ldots,d$.
		\item $b_i \in C^2 \bigl(\overline{\cO};\R^d \bigr)$ for every $i = 1, \ldots, N$ and 
		\begin{equation} \label{eq:hp-on-b-nu}
		\tilde{C} \gamma (\bb) + \abs{\bb}_\infty^2 \le 2 \nu, 
		\end{equation}
		where $\tilde{C}$ and $\gamma$ are as in \Cref{thm:dive-estimates}.
	\end{myenum}
\end{ass} 

Notice that condition \eqref{eq:hp-on-b-nu} tells us that the nearer $\nu$ is to $0$, the stricter the condition on the norm of $\bb$ is.
In particular, the case $\nu = 0$ implies $\bb \equiv 0$, reducing \cref{eq:spde ito} to the (deterministic) PDE $\partial_t X = \Delta \psi(X)$.

The solution we are looking for is to be intended in the following sense.
\begin{defn}\label{def:strong solution}
	A \emph{solution} to \cref{eq:spde ito} in $\interval{0}{T}$ is an $(\cF_t)_{T\ge 0}$-adapted stochastic process $X$ such that
	\begin{myenum}%[(i)]
		\item $X \in L^2\bigl(\Omega; L^\infty\bigl(\interval{0}{T};H^{-1}(\cO)\bigr)\bigr) \cap L^2\bigl(\Omega \times \interval{0}{T};H^1_0(\cO)\bigr)$,
		\item there exists a process $\eta \in L^2(\Omega \times \interval[open]{0}{T} \times \cO)$ such that $\eta \in \psi(X)$ a.e., and
		\begin{multline} \label{eq:distr-sol-strato}
		\begin{aligned} 
		\inner{X(t), & f_j}_{-1} = \inner{x, f_j}_{-1} - \nu \int_0^t \inner{X(s) , f_j}_{2} ds - \int_0^t \inner{\eta(s),f_j}_2 ds \\[3pt]
		& + \frac{1}{2} \int_0^t \inner{\dive (\bb^\ast \bb \nabla X(s), f_j}_{-1} ds + \int_0^t \inner{ \bb \nabla X(s), f_j}_{-1} d\beta (s),
		\end{aligned} \\[3pt]
		\text{for every } j \in \N, \, t \in \interval{0}{T}, \, \Pr\text{-a.s.},
		\end{multline}
		where $(f_j)_{j \in \N}$ is an orthonormal basis for $-\Delta$ in $H^{-1}$.
		\item $X$ is pathwise continuous from $\interval{0}{T}$ to $H^{-1}(\cO)$.
	\end{myenum}
\end{defn}
A solution of this type is also referred to as \emph{distributional solution} since we can equivalently write \cref{eq:distr-sol-strato} as
%\begin{multline*} 
%X(t) = x - \nu \int_0^t \Delta X(s) \, ds - \Delta\! \int_0^t \eta(s) \, ds \\
%+ \frac{1}{2} \int_0^t \dive (\bb^\ast \bb \nabla X(s)) \, ds + \int_0^t \bb \nabla X(s) \cdot  d\beta(s), 
%\end{multline*} 
\begin{equation*} 
X(t) = x - \nu \int_0^t \Delta X(s) \, ds - \Delta\! \int_0^t \eta(s) \, ds
+ \frac{1}{2} \int_0^t \dive (\bb^\ast \bb \nabla X(s)) \, ds + \int_0^t \bb \nabla X(s) \cdot  d\beta(s), 
\end{equation*}
where $\Delta \colon H^1_0(\cO) \to H^{-1}(\cO)$ is taken in the sense of distributions on $\cO$.

\section{The approximating problem} \label{sec:approx}

Under \Cref{ass: on b} we define, for every $\lambda > 0$, the \emph{resolvent} and the \emph{Yosida approximation} of $\psi$,
\begin{equation*}
J_\lambda \doteq (I+\lambda \psi)^{-1}, \qquad 
\psi_\lambda \doteq \frac{1}{\lambda} (I-J_\lambda),
\end{equation*}
respectively, which are known to be Lipschitz-continuous, see, e.g., \cite{barbu2010nonlinear}.
We shall consider thus the following \emph{approximating problem}, for $\lambda > 0$,
\begin{equation}\label{eq:spde approx}
\begin{cases}
\begin{aligned} dX_\lambda - \Bigl( \nu \Delta X_\lambda + \Delta \psi_\lambda (X_\lambda) + \frac{1}{2} \dive (\bba \, \bb  & \, \nabla X_\lambda) \Bigr) dt = \\ & = \langle \bb \nabla X_\lambda, d\beta \rangle , \end{aligned}  & \text{in } \interval[open]{0}{T} \times \cO, \\
X_\lambda(0,\xi) = x(\xi), & \text{in } \cO, \\
X_\lambda(t,\xi) = 0, & \text{on } \interval[open]{0}{T} \times \partial \cO.
\end{cases}
\end{equation}

We will need the following result.
\begin{lem} \label{thm:growth-yosida}
	We have, for every $r \in \R$ and $\lambda > 0$,
	\[ \abs{\psi_\lambda (r)} \le C(1+\abs{r}^m).  \]
\end{lem}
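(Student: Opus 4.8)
The plan is to derive the bound $\abs{\psi_\lambda(r)} \le C(1+\abs{r}^m)$ directly from the polynomial growth hypothesis \ref{H2} on $\psi$, exploiting the two standard structural facts about the Yosida approximation: first, that $\psi_\lambda(r) \in \psi(J_\lambda r)$ for every $r$ and $\lambda > 0$; and second, that the resolvent $J_\lambda$ is a contraction fixing $0$ (since $0 \in \psi(0)$ by \ref{H1}), so that $\abs{J_\lambda r} \le \abs{r}$ for all $r$. These are classical properties recorded in \cite{barbu2010nonlinear}, and I would simply cite them.

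The main computation is then short. Fix $r \in \R$ and $\lambda > 0$. Since $\psi_\lambda(r) \in \psi(J_\lambda r)$, hypothesis \ref{H2} gives
\[
\abs{\psi_\lambda(r)} \le \sup\{\abs{\theta} \colon \theta \in \psi(J_\lambda r)\} \le C(1 + \abs{J_\lambda r}^m).
\]
Applying the contraction property $\abs{J_\lambda r} \le \abs{r}$ and monotonicity of $t \mapsto t^m$ on $[0,\infty)$ yields $\abs{J_\lambda r}^m \le \abs{r}^m$, hence
\[
\abs{\psi_\lambda(r)} \le C(1 + \abs{r}^m),
\]
with the \emph{same} constant $C$ from \ref{H2}, independent of $\lambda$. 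This is the assertion.

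There is essentially no serious obstacle here; the only point requiring a little care is the membership $\psi_\lambda(r) \in \psi(J_\lambda r)$, which is where the multivaluedness of $\psi$ actually enters, and which follows from the definitions: writing $s = J_\lambda r$, by definition of the resolvent $r \in s + \lambda \psi(s)$, so $(r - s)/\lambda \in \psi(s)$, and the left-hand side is precisely $\psi_\lambda(r)$. The uniformity in $\lambda$ is automatic once one notes that $J_\lambda$ is a contraction regardless of $\lambda$, so no additional argument is needed there. If one wished, one could also remark that the same proof shows $\psi_\lambda$ inherits any growth rate of $\psi$, but for the present purposes the polynomial bound above is all that is needed in the sequel.
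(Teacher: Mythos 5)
Your proposal is correct and follows exactly the same route as the paper: the membership $\psi_\lambda(r)\in\psi(J_\lambda r)$, the growth hypothesis (ii), and the contraction property $\abs{J_\lambda r}\le\abs{r}$ (using $0\in\psi(0)$). The paper compresses this into a single chain of inequalities; your version merely spells out the justifications.
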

\begin{proof}
	It holds, for every $r \in \R$,
	\begin{equation*} 
	\abs{\psi_\lambda (r)} \le \sup \left\{ \abs{\theta} \colon \theta \in \psi (J_\lambda (r)) \right\} \le C (1+\abs{J_\lambda (r)}^m) \le C(1+\abs{r}^m). \qedhere 
	\end{equation*} 
\end{proof}

Moreover, one can also see that $r\mapsto \psi_\lambda (r) + \nu r$ is strictly monotonically increasing, bounded by $C (1+\abs{r}^m)$ and $(\psi_\lambda (r) + \nu r) r \ge \nu \abs{r}^2$ for all $r \in \R$.

We have, thus, the following existence result, which is a consequence of Krylov and Rozovskii Theorem, see \cite{krylov1981stochastic} or the more recent book~\cite{liu2015stochastic}.

\begin{prop}\label{thm:approximating-existence-uniqueness}
	Suppose \Cref{ass: on b} holds. Then \cref{eq:spde approx} admits a unique variational solution.
\end{prop}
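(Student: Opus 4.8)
The plan is to recognize \eqref{eq:spde approx} as an equation in the variational (Krylov--Rozovskii) framework and to verify its four structural hypotheses. I would work in the Gelfand triple $V\doteq L^2(\cO)\hookrightarrow H\doteq H^{-1}(\cO)\hookrightarrow V^\ast$, where $H^{-1}$ carries the inner product $\inner{\cdot,\cdot}_{-1}$ and $V^\ast$ is identified with $L^2(\cO)$ via $(-\Delta)^{-1}$, so that the duality pairing ${}_{V^\ast}\inner{\cdot,\cdot}_V$ extends $\inner{\cdot,\cdot}_{-1}$ and $\norm{g}_{V^\ast}=\abs{(-\Delta)^{-1}g}_2$. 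On this triple I set $\mathcal{A}(u)\doteq\nu\Delta u+\Delta\psi_\lambda(u)+\tfrac12\dive(\bba\bb\nabla u)$ and let $\mathcal{B}(u)$ be the Hilbert--Schmidt operator in $L_2(\R^N;H)$ given by $h\mapsto\sum_{i=1}^N h_i\inner{b_i,\nabla u}$, i.e.\ $\mathcal{B}(u)=\bb\nabla u$. These are well defined on $V$: since $\psi_\lambda$ is Lipschitz with $\psi_\lambda(0)=0$ one has $\abs{\psi_\lambda(u)}_2\le\lambda^{-1}\abs{u}_2$, hence $\Delta\psi_\lambda(u)\in V^\ast$; \Cref{thm:dive-estimates} gives $\tfrac12\dive(\bba\bb\nabla u)\in V^\ast$; and $\inner{b_i,\nabla u}=\dive(b_i u)-(\dive b_i)u\in H^{-1}$, so $\norm{\mathcal{B}(u)}_{L_2(\R^N;H)}\le C\abs{u}_2$.

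Hemicontinuity of $s\mapsto{}_{V^\ast}\inner{\mathcal{A}(u+sv),w}_V$ and the growth bound $\norm{\mathcal{A}(u)}_{V^\ast}\le C_\lambda(1+\abs{u}_2)$ follow immediately from the Lipschitz continuity of $\psi_\lambda$, the linearity of the remaining two summands, and \Cref{thm:dive-estimates}. The substance lies in weak monotonicity and coercivity, which I would establish in the $H^{-1}$ topology. Writing $\mathcal{A}(u)=\Delta\Phi_\lambda(u)+\tfrac12\dive(\bba\bb\nabla u)$ with $\Phi_\lambda(r)\doteq\nu r+\psi_\lambda(r)$ — which is $\nu$-strongly monotone since $\psi_\lambda$ is monotone, $(\Phi_\lambda(r)-\Phi_\lambda(s))(r-s)\ge\nu\abs{r-s}^2$ — one gets ${}_{V^\ast}\inner{\Delta\Phi_\lambda(u)-\Delta\Phi_\lambda(v),u-v}_V=-\int_\cO(\Phi_\lambda(u)-\Phi_\lambda(v))(u-v)\,d\xi\le-\nu\abs{u-v}_2^2$. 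It then remains to control the sum of the contribution of $\tfrac12\dive(\bba\bb\nabla u)$ to $2\,{}_{V^\ast}\inner{\mathcal{A}(u),u}_V$ and of the quadratic variation $\norm{\mathcal{B}(u)}_{L_2(\R^N;H)}^2=\sum_i\norm{\inner{b_i,\nabla u}}_{-1}^2$: when both are evaluated in the $H^{-1}$ inner product, their leading second-order parts cancel (and cancel exactly when the $b_i$ are constant), leaving a first-order remainder bounded, through estimates of the type in \Cref{thm:dive-estimates}, by $\tilde{C}\gamma(\bb)\abs{u}_2^2$. Collecting everything, $2\,{}_{V^\ast}\inner{\mathcal{A}(u)-\mathcal{A}(v),u-v}_V+\norm{\mathcal{B}(u)-\mathcal{B}(v)}_{L_2}^2\le-(2\nu-\tilde{C}\gamma(\bb))\abs{u-v}_2^2$ and $2\,{}_{V^\ast}\inner{\mathcal{A}(u),u}_V+\norm{\mathcal{B}(u)}_{L_2}^2\le-(2\nu-\tilde{C}\gamma(\bb))\abs{u}_2^2$; by \eqref{eq:hp-on-b-nu} the coefficient $2\nu-\tilde{C}\gamma(\bb)$ is $\ge\abs{\bb}_\infty^2\ge0$, and it is strictly positive unless $\bb\equiv0$ (the purely deterministic case, where coercivity follows directly from $-2\nu\abs{u}_2^2$). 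This is the weak monotonicity and the strict coercivity required by the abstract theorem.

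With all four hypotheses verified, the Krylov--Rozovskii theorem (see \cite{krylov1981stochastic} or \cite{liu2015stochastic}) yields a unique variational solution of \eqref{eq:spde approx}, continuous in $H^{-1}(\cO)$ and lying in $L^2\bigl(\Omega\times\interval{0}{T};L^2(\cO)\bigr)$; uniqueness is also an immediate consequence of the weak monotonicity estimate via Gr\"onwall's inequality. I expect the main difficulty to be precisely this monotonicity/coercivity step: the computations must be done in $H^{-1}(\cO)$ rather than in $L^2(\cO)$, and, since \eqref{eq:hp-on-b-nu} bounds $\tilde{C}\gamma(\bb)+\abs{\bb}_\infty^2$ by $2\nu$ only with possible equality, one must exploit the exact cancellation between the Stratonovich correction term and the It\^o quadratic variation — keeping careful track of the constant in \Cref{thm:dive-estimates} — in order to reduce the loss term to $\tilde{C}\gamma(\bb)\abs{u}_2^2$ and so obtain a strictly negative coercivity coefficient.
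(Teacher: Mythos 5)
The paper offers no proof of this proposition beyond the one-line appeal to the Krylov--Rozovskii theorem, so your proposal --- setting up the Gelfand triple $L^2(\cO)\hookrightarrow H^{-1}(\cO)\hookrightarrow V^\ast$, identifying $\mathcal{A}$ and $\mathcal{B}$, and checking hemicontinuity, growth, weak monotonicity and coercivity --- is exactly the intended argument, carried out in detail. Most of the verification is correct: the bound $\norm{\Delta\psi_\lambda(u)}_{V^\ast}=\abs{\psi_\lambda(u)}_2\lesssim\lambda^{-1}\abs{u}_2$, the use of \Cref{thm:dive-estimates} to place $\dive(\bba\bb\nabla u)$ in $V^\ast$ with norm $\le\tilde C\gamma\abs{u}_2$, and the decomposition $\inner{b_i,\nabla u}=\dive(b_iu)-(\dive b_i)u$ to control $\norm{\mathcal{B}(u)}_{L_2}$ are all sound, and they are precisely the estimates the paper later deploys in Steps 1--2 of the proof of \Cref{thm:existence}.

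The one step I do not believe is the claimed ``exact cancellation'' of the leading second-order parts of ${}_{V^\ast}\inner{\dive(\bba\bb\nabla u),u}_V$ and $\norm{\bb\nabla u}_{-1}^2$. That cancellation rests on commuting $(-\Delta)^{-1}$ with first-order derivatives, which is legitimate on the torus or the whole space but fails on a bounded domain with Dirichlet conditions: $\inner{\partial_k\partial_j u,u}_{-1}\ne-\inner{\partial_j u,\partial_k u}_{-1}$ in general, because integrating by parts through $(-\Delta)^{-1}$ produces commutator/boundary contributions. You should instead bound the two terms separately, as the paper does: $\abs{\inner{\dive(\bba\bb\nabla u),u}_{-1}}\le\tilde C\gamma\abs{u}_2^2$ and $\norm{\bb\nabla u}_{-1}^2\le\abs{\bb}_\infty^2\abs{u}_2^2+\abs{(\dive b_j)_j}_\infty^2\norm{u}_{-1}^2$. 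This still yields weak monotonicity, since the resulting right-hand side is $-(2\nu-\tilde C\gamma-\abs{\bb}_\infty^2)\abs{u-v}_2^2+C\norm{u-v}_{-1}^2\le C\norm{u-v}_H^2$ by \eqref{eq:hp-on-b-nu} (and uniqueness needs only this), and it yields coercivity with constant $\theta=2\nu-\tilde C\gamma-\abs{\bb}_\infty^2$ plus an admissible $C\norm{u}_H^2$ term. The residual issue you correctly sensed is that \eqref{eq:hp-on-b-nu} allows $\theta=0$, in which case strict $V$-coercivity degenerates and the standard form of the theorem does not apply verbatim; but your cancellation device does not rescue this borderline case, and the paper itself silently ignores it. Away from equality in \eqref{eq:hp-on-b-nu}, your argument (with the separate estimates substituted for the cancellation) is complete.
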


We conclude this section with the following result.  
\begin{lem} \label{thm:grad-estimates}
	Let $X_\lambda$ be a solution to \cref{eq:spde approx}. 
	Then there exist $C_1, C_2 > 0$, depending on $\nu$ and on the $L^2$-norm of the initial condition $x$ but not on the parameter $\lambda$, such that
	\[ \E \abs{X_\lambda (t)}_2^2 + C_1 \E \int_0^t \abs{\nabla X_\lambda (s)}_2^2 ds \le C_2. \]
\end{lem}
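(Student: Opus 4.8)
The plan is to apply the \ito\ formula in $H^{-1}(\cO)$ to the function $\abs{X_\lambda(t)}_2^2$ (equivalently $\inner{X_\lambda(t), X_\lambda(t)}$, using that $X_\lambda$ takes values in $L^2$ thanks to the variational solution framework of \Cref{thm:approximating-existence-uniqueness}), and then estimate each resulting term so as to absorb the noise and the lower-order drift contributions into the good dissipative term coming from $-\nu\Delta X_\lambda$. Concretely, I would write
\[
\abs{X_\lambda(t)}_2^2 = \abs{x}_2^2 + 2\int_0^t \langle \text{drift}, X_\lambda(s)\rangle\, ds + 2\int_0^t \langle \bb\nabla X_\lambda(s), X_\lambda(s)\rangle\, d\beta(s) + \int_0^t \abs{\bb\nabla X_\lambda(s)}_2^2\, ds,
\]
where the last term is the \ito\ correction (quadratic variation of the stochastic integral). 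The drift pairing splits into four pieces: the term $-\nu\int_0^t \inner{\Delta X_\lambda, X_\lambda}\,ds = \nu\int_0^t \abs{\nabla X_\lambda}_2^2\,ds$ (sign good, but on the wrong side — it must be kept), the term $-\int_0^t \inner{\Delta\psi_\lambda(X_\lambda), X_\lambda}\,ds = \int_0^t \inner{\psi_\lambda(X_\lambda),X_\lambda}'\,ds \ge 0$ by monotonicity of $\psi_\lambda$ with $0\in\psi_\lambda(0)$ (so it can simply be dropped), and the Stratonovich-to-\ito\ correction $\frac12\int_0^t \inner{\dive(\bba\bb\nabla X_\lambda), X_\lambda}\,ds = -\frac12\int_0^t \inner{\bba\bb\nabla X_\lambda, \nabla X_\lambda}\,ds \le 0$ (also droppable, or estimable from above by $\tfrac12\abs{\bb}_\infty^2\int_0^t\abs{\nabla X_\lambda}_2^2\,ds$ if one does not want to exploit its sign).

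The crucial cancellation is between the $-\nu\int_0^t\abs{\nabla X_\lambda}_2^2\,ds$ term (appearing with a minus once we move $\abs{X_\lambda(t)}_2^2$ to the left) and the \ito\ correction $\int_0^t\abs{\bb\nabla X_\lambda}_2^2\,ds \le \abs{\bb}_\infty^2\int_0^t\abs{\nabla X_\lambda}_2^2\,ds$, together with the Stratonovich correction bounded by $\tfrac12\tilde C\gamma\int_0^t\abs{\nabla X_\lambda}_2^2\,ds$ (using \Cref{thm:dive-estimates}, or rather its pointwise analogue $\abs{\inner{\bba\bb\nabla v,\nabla v}}\le \tilde C\gamma\abs{v}_2\cdot(\ldots)$ — here one really just uses $\abs{A_{kj}}_\infty$ boundedness from \ref{H3}). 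Assumption \eqref{eq:hp-on-b-nu}, $\tilde C\gamma(\bb) + \abs{\bb}_\infty^2 \le 2\nu$, is precisely what guarantees that
\[
2\nu\int_0^t\abs{\nabla X_\lambda}_2^2\,ds - \int_0^t\abs{\bb\nabla X_\lambda}_2^2\,ds - \tfrac12\int_0^t\inner{\bba\bb\nabla X_\lambda,\nabla X_\lambda}\,ds \ge C_1\int_0^t\abs{\nabla X_\lambda}_2^2\,ds
\]
for some $C_1 > 0$ (e.g. $C_1$ any constant less than $2\nu - \abs{\bb}_\infty^2 - \tilde C\gamma(\bb)$, or $C_1=\nu$ if one only needs the weaker constant coming from dropping the Stratonovich term). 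Taking expectations kills the martingale term — I would first localize with a stopping time $\tau_n$ to justify that $\E\int_0^{t\wedge\tau_n}\langle\bb\nabla X_\lambda,X_\lambda\rangle\,d\beta = 0$, then pass to the limit $n\to\infty$ via Fatou on the left and monotone convergence on the $\abs{\nabla X_\lambda}_2^2$ term — leaving $\E\abs{X_\lambda(t)}_2^2 + C_1\E\int_0^t\abs{\nabla X_\lambda(s)}_2^2\,ds \le \abs{x}_2^2 =: C_2$, which is the claim with $C_2$ independent of $\lambda$ since the growth of $\psi_\lambda$ never entered (the $\psi_\lambda$ term was simply discarded by monotonicity).

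The main obstacle is rigor in applying \ito's formula: the variational solution lives in $L^2(\Omega\times[0,T];H^1_0)\cap L^2(\Omega;C([0,T];H^{-1}))$, and the natural \ito\ formula for $\norm{\cdot}_{-1}^2$ à la Krylov–Rozovskii would produce the pairing in the wrong spaces; what I actually want is the formula for $\abs{\cdot}_2^2$, which requires knowing $X_\lambda \in L^2$ with enough integrability — this is legitimate here because, after truncating $\psi$ by its Yosida approximation, the drift of \eqref{eq:spde approx} maps $H^1_0 \to H^{-1}$ with linear growth (using \Cref{thm:growth-yosida} and \ref{H2} with $m \le d/(d-2)$, so $L^{2m}\hookrightarrow$ via Sobolev and $\psi_\lambda(X_\lambda)\in L^2$), so the standard variational \ito\ formula of Krylov–Rozovskii in the Gelfand triple $H^1_0 \subset L^2 \subset H^{-1}$ applies directly to $\abs{X_\lambda(t)}_2^2$. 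One must also double-check that the stochastic term $\langle\bb\nabla X_\lambda, d\beta\rangle$, viewed as an $L^2$-valued (indeed $H^{-1}$-valued) process, has the diffusion coefficient $v \mapsto \bb\nabla v$ bounded from $H^1_0$ into $L^2$ — immediate from \ref{H3} and $b_i\in C^2(\overline\cO)$ — so that all Hilbert–Schmidt norms appearing are controlled by $\abs{\bb}_\infty^2\abs{\nabla X_\lambda}_2^2$ as used above. Beyond this bookkeeping the estimate is a one-line Gronwall-free computation, since the right-hand side $C_2$ is just the deterministic constant $\abs{x}_2^2$.
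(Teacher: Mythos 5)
Your proposal is correct and follows the same route as the paper: \ito's formula in $L^2$ for $\abs{X_\lambda(t)}_2^2$, discarding the term $2\int_0^t\int_\cO \psi_\lambda'(X_\lambda)\abs{\nabla X_\lambda}^2$ by monotonicity, and taking expectations to kill the martingale. The one substantive difference is how the two correction terms are handled. In the paper they simply do not appear, because they cancel exactly: the drift pairing $2\inner{X_\lambda,\tfrac12\dive(\bba\bb\nabla X_\lambda)}_2=-\int_\cO\abs{\bb\nabla X_\lambda}^2\,d\xi$ is the precise negative of the \ito\ correction $\sum_i\abs{b_i\cdot\nabla X_\lambda}_2^2$, so one is left with $C_1=2\nu$ and $C_2=\abs{x}_2^2$ without invoking \eqref{eq:hp-on-b-nu} at all. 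You instead estimate the two terms separately by $\abs{\bb}_\infty^2\int\abs{\nabla X_\lambda}_2^2$ and appeal to \eqref{eq:hp-on-b-nu}; that also closes, but note two small points. First, in your displayed inequality the Stratonovich contribution carries a spurious $\tfrac12$: after the factor $2$ from $d\abs{X}_2^2=2\inner{X,dX}_2+\cdots$ it is $-\inner{\bba\bb\nabla X_\lambda,\nabla X_\lambda}_2$, not $-\tfrac12\inner{\cdot,\cdot}$; with the $\tfrac12$ you would need $2\nu>\tfrac32\abs{\bb}_\infty^2$, which does not follow from \eqref{eq:hp-on-b-nu}. Second, \eqref{eq:hp-on-b-nu} is a non-strict inequality, so "$C_1$ any constant less than $2\nu-\abs{\bb}_\infty^2-\tilde C\gamma$" could be nonpositive; the safe version is the one you also mention, namely dropping the (nonpositive) Stratonovich term and using $2\nu-\abs{\bb}_\infty^2\ge\tilde C\gamma>0$ for $\bb\not\equiv0$ --- or better, just record the exact cancellation. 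Your remarks on justifying the $L^2$-\ito\ formula and on localizing the stochastic integral are more careful than the paper, which states the identity directly.
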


\begin{proof}
	Let $X_\lambda$ be the solution to \cref{eq:spde approx}, then by \ito's formula in $L^2$ we have 
	\[ d \abs{ X_\lambda (t)}_2^2 - 2 \inner{X_\lambda (t), \nu \Delta X_\lambda (t) + \Delta \psi_\lambda (X_\lambda(t))}_{2} dt = 2 \sum_{i=1}^N \inner{X_\lambda (t), b_i \cdot \nabla X_\lambda (t) \, d \beta_i (t)}_2. \]
	Hence,
	\begin{multline*}
	\abs{ X_\lambda (t)}_2^2 - 2 \int_0^t \inner{X_\lambda (s), \nu \Delta X_\lambda (s) + \Delta \psi_\lambda (X_\lambda(s))}_{2}  ds = \\
	= 2 \sum_{i=1}^N \int_{0}^t \inner{X_\lambda (s), b_i \cdot \nabla X_\lambda (s) \, d \beta_i (s)}_2,
	\end{multline*}
	which gives
	\begin{multline*}
	\abs{ X_\lambda (t)}_2^2 + 2 \nu \int_0^t \abs{ \nabla X_\lambda (s) }_2^2 ds + 2 \int_0^t \int_{\cO} \psi_\lambda'(X_\lambda(s)) \abs{\nabla X_\lambda (s)}^2 d\xi \, ds = \\
	= \abs{x}_2^2 + 2 \sum_{i=1}^N \int_{0}^t \inner{X_\lambda (s), b_i \cdot \nabla X_\lambda (s) \, d \beta_i (s)}_2.
	\end{multline*}
	Then, taking the expectations,
	\begin{equation*}
	\E \abs{ X_\lambda (t)}_2^2 + 2 \nu \E \int_0^t \abs{ \nabla X_\lambda (s) }_2^2 ds + 2 \E \int_0^t \int_{\cO} \psi_\lambda'(X_\lambda(s)) \abs{\nabla X_\lambda (s)}^2 d\xi \, ds 
	= \abs{x}_2^2.
	\end{equation*}
	The maximal monotonicity of $r\mapsto \psi(r)$ yields the result.
\end{proof}

\section{Existence and uniqueness of the solution} \label{sec:existence}

This section is devoted to the proof of the main result of the work, namely the existence and uniqueness of a solution to \cref{eq:spde ito}. 

\begin{thm}\label{thm:existence}
	If \Cref{ass: on b} holds and $x \in L^2(\cO)$, then \cref{eq:spde ito} admits a unique solution in the sense of \Cref{def:strong solution}.
\end{thm}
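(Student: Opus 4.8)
\emph{Sketch of proof.} The plan is to let $\lambda\downarrow0$ in the solutions $X_\lambda$ of the approximating problem \eqref{eq:spde approx}, which exist and are unique by \Cref{thm:approximating-existence-uniqueness}, and to identify the limit as a solution of \eqref{eq:spde ito} in the sense of \Cref{def:strong solution}. The first step is to collect bounds that are uniform in $\lambda$. By \Cref{thm:grad-estimates} the family $(X_\lambda)_{\lambda>0}$ is bounded in $L^2\bigl(\Omega\times\interval{0}{T};H^1_0(\cO)\bigr)$; applying \ito's formula to $\norm{X_\lambda(t)}_{-1}^2$ --- in which the Stratonovich correction $\tfrac12\dive(\bba\bb\nabla X_\lambda)$ is controlled through \Cref{thm:dive-estimates}, the monotone term $\inner{\psi_\lambda(X_\lambda),X_\lambda}_2\ge0$ is discarded, and the \ito\ correction produced by the noise is absorbed using \eqref{eq:hp-on-b-nu} --- one obtains in addition a uniform bound in $L^2\bigl(\Omega;L^\infty(\interval{0}{T};H^{-1}(\cO))\bigr)$. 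Finally, \Cref{thm:growth-yosida}, the growth hypothesis of \Cref{ass: on b} and the Sobolev embedding $H^1_0(\cO)\hookrightarrow L^{2m}(\cO)$ (guaranteed by $m\le d/(d-2)$) give a uniform bound for $(\psi_\lambda(X_\lambda))_{\lambda>0}$ in $L^2\bigl(\Omega\times\interval{0}{T}\times\cO\bigr)$.

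From these bounds, along a subsequence $X_\lambda\wto X$ weakly in $L^2(\Omega\times\interval{0}{T};H^1_0)$ and weakly-$\ast$ in $L^2(\Omega;L^\infty(\interval{0}{T};H^{-1}))$, while $\psi_\lambda(X_\lambda)\wto\eta$ weakly in $L^2(\Omega\times\interval{0}{T}\times\cO)$, for some $X$, $\eta$ lying in the spaces of items (i)--(ii) of \Cref{def:strong solution}. The decisive step is to upgrade this to strong convergence of $(X_\lambda)$ in $L^2\bigl(\Omega;C(\interval{0}{T};H^{-1}(\cO))\bigr)$. To this end I would apply \ito's formula to $\norm{X_\lambda(t)-X_\mu(t)}_{-1}^2$ for $\lambda,\mu>0$: the quasilinear contribution is $-\inner{\psi_\lambda(X_\lambda)-\psi_\mu(X_\mu),\,X_\lambda-X_\mu}_2$, which after writing $X_\lambda=J_\lambda X_\lambda+\lambda\psi_\lambda(X_\lambda)$ and $X_\mu=J_\mu X_\mu+\mu\psi_\mu(X_\mu)$ and exploiting $\psi_\lambda(X_\lambda)\in\psi(J_\lambda X_\lambda)$, $\psi_\mu(X_\mu)\in\psi(J_\mu X_\mu)$ together with the monotonicity of $\psi$, is bounded by $C(\lambda+\mu)$ times quantities that are uniformly integrable by the first step; the $\nu$-term produces $-2\nu\abs{X_\lambda-X_\mu}_2^2$, the Stratonovich correction is again handled by \Cref{thm:dive-estimates}, and the \ito\ correction of the noise, together with the remaining lower-order terms, is absorbed using \eqref{eq:hp-on-b-nu}, so that the deterministic part of the right-hand side is nonpositive up to the $O(\lambda+\mu)$ error; the martingale part is controlled by the Burkholder--Davis--Gundy inequality. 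Taking expectations and then the supremum over $t$ yields that $(X_\lambda)_{\lambda>0}$ is Cauchy in $L^2\bigl(\Omega;C(\interval{0}{T};H^{-1})\bigr)$, hence convergent to $X$ there; in particular $X$ has a.s.\ continuous $H^{-1}$-valued paths, which is item (iii) of \Cref{def:strong solution}.

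It remains to check that $\eta\in\psi(X)$ a.e.\ and to pass to the limit in the equation. Since $\lambda\psi_\lambda(X_\lambda)\to0$ in $L^2(\Omega\times\interval{0}{T}\times\cO)$ by the first step, $J_\lambda X_\lambda=X_\lambda-\lambda\psi_\lambda(X_\lambda)\to X$ strongly in $L^2(\Omega\times\interval{0}{T};H^{-1})$, hence a.e.\ along a further subsequence; combining this with $\psi_\lambda(X_\lambda)\wto\eta$, $\psi_\lambda(X_\lambda)\in\psi(J_\lambda X_\lambda)$ and the convergence (strong $\times$ weak) of the $L^2(\Omega\times\interval{0}{T}\times\cO)$ pairing $\inner{\psi_\lambda(X_\lambda),J_\lambda X_\lambda}\to\inner{\eta,X}$, the maximal monotonicity (demiclosedness) of the realisation of $\psi$ on $L^2(\Omega\times\interval{0}{T}\times\cO)$ gives $\eta\in\psi(X)$ a.e. Passing to the limit in \eqref{eq:distr-sol-strato} written for $X_\lambda$ is then routine: the deterministic terms converge by the above weak convergences --- for the Stratonovich term one uses that $(-\Delta)^{-1}\dive(\bba\bb\nabla\cdot)$ is a bounded operator on $L^2(\cO)$ by \Cref{thm:dive-estimates} --- and the stochastic integral converges because $Y\mapsto\int_0^{\cdot}\inner{\bb\nabla Y(s),f_j}_{-1}\,d\beta(s)$ is a bounded linear, hence weakly continuous, map from $L^2(\Omega\times\interval{0}{T};H^1_0)$ into $L^2(\Omega;C(\interval{0}{T}))$. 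This proves existence. Uniqueness follows by applying \ito's formula to $\norm{X-Y}_{-1}^2$ for two solutions $X$, $Y$ with the same initial datum: the $\psi$-term is $\le0$ by monotonicity, the $\nu$-term, the Stratonovich correction and the \ito\ correction combine --- once more via \Cref{thm:dive-estimates} and \eqref{eq:hp-on-b-nu} --- into a nonpositive drift, and Gronwall's lemma forces $X=Y$.

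The main obstacle is the strong-convergence step: making \ito's formula in $H^{-1}$ rigorous for the a priori only variational processes $X_\lambda$, and, above all, checking that the gradient noise does not destroy the contraction in the $H^{-1}$-norm. This is exactly where the Stratonovich structure enters --- the \ito\ correction $\tfrac12\dive(\bba\bb\nabla\cdot)$ is what balances the quadratic variation of $\inner{\bb\nabla X,d\beta}$ --- and where the smallness condition \eqref{eq:hp-on-b-nu} relating $\nu$, $\gamma(\bb)$ and $\abs{\bb}_\infty$ is used.
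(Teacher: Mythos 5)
Your proposal is correct and follows essentially the same route as the paper: a Yosida approximation scheme, uniform bounds from \Cref{thm:grad-estimates} and the growth/Sobolev argument, the $H^{-1}$ \ito\ formula for $\norm{X_\lambda-X_\mu}_{-1}^2$ with the monotone term controlled to $O(\lambda+\mu)$ via the resolvent decomposition, the Stratonovich correction and the \ito\ quadratic-variation term absorbed through \Cref{thm:dive-estimates} and \eqref{eq:hp-on-b-nu}, Burkholder--Davis--Gundy plus Gronwall for the Cauchy property, maximal monotonicity to identify $\eta\in\psi(X)$, and the same contraction argument for uniqueness. The only (harmless) difference is that you are somewhat more explicit about the demiclosedness pairing and the passage to the limit in the equation itself.
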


\begin{proof}
	As concerns existence, we now prove that the sequence of solutions to \cref{eq:spde approx}, $(X_\lambda)_{\lambda>0}$, is a Cauchy sequence in $L^2(\Omega; L^\infty(\interval{0}{T};H^{-1}(\cO)))$ as $\lambda \to 0$.
	Consider $X_\lambda$ and $X_\mu$, with $\lambda, \mu > 0$, then by \ito's formula in $H^{-1}$ we get
	\[ 
	\begin{split}
	\norm{X_\lambda(t) - X_\mu(t)}_{-1}^2 = \, & 2 \nu \int_0^t \inner{\Delta (X_\lambda(s) - X_\mu(s) ) , X_\lambda (s) - X_\mu (s) }_{-1} ds \\ 
	& + 2 \int_0^t \inner{\Delta \psi_{\lambda} (X_\lambda(s)) - \Delta \psi_\mu (X_\mu(s)), X_\lambda (s) - X_\mu (s)}_{-1} ds \\
	& + \int_0^t \inner{\dive (\bb^\ast \bb \nabla (X_\lambda(s) - X_\mu(s))), X_\lambda(s) - X_\mu(s)}_{-1} ds \\
	& + \int_0^t \norm{\bb \nabla (X_\lambda(s) - X_\mu(s))}_{-1}^2 ds \\
	& + 2 \sum_{i=1}^N \int_0^t \inner{ X_\lambda (s) - X_\mu (s) , b_i \cdot \nabla (X_\lambda (s) - X_\mu (s)) \, d \beta_i(s)}_{-1},
	\end{split} 
	\] 
	which can be written as
	\begin{equation} \label{eq:norm-diff-h1} 
	\begin{split} 
	\norm{X_\lambda(t) - X_\mu(t)}_{-1}^2 = \, & - 2 \nu \int_0^t \abs{ X_\lambda (s) - X_\mu (s) }^2_{2} ds \\ 
	& - 2 \int_0^t \inner{\psi_{\lambda} (X_\lambda(s)) - \psi_\mu (X_\mu(s)), X_\lambda (s) - X_\mu (s)}_{2} ds \\
	& + \int_0^t \inner{\dive (\bb^\ast \bb \nabla (X_\lambda(s) - X_\mu(s))), X_\lambda(s) - X_\mu(s)}_{-1} ds \\
	& + \int_0^t \norm{\bb \nabla (X_\lambda(s) - X_\mu(s))}_{-1}^2 ds \\
	& + 2 \sum_{i=1}^N \int_0^t \inner{ X_\lambda (s) - X_\mu (s) , b_i \cdot \nabla (X_\lambda (s) - X_\mu (s)) \, d \beta_i(s)}_{-1} .
	\end{split}
	\end{equation} 
	
	We shall now provide some estimates of the right-hand side terms in \cref{eq:norm-diff-h1}. Therefore we divide the remaining part of the proof in different steps.
	
	\proofpart{1}{Estimate of $ \int_0^t \inner{\dive (\bb^\ast \bb \nabla (X_\lambda(s) - X_\mu(s))), X_\lambda(s) - X_\mu(s)}_{-1} ds $.}
	By Cauchy-Schwartz inequality we have
%	\begin{multline*} 
%	\abs{\inner{\dive (\bb^\ast \bb \nabla (X_\lambda - X_\mu)), X_\lambda - X_\mu}_{-1} } \le  \\
%	\begin{aligned} 
%	& \le \abs{\inner{(-\Delta)^{-1} \dive (\bb^\ast \bb \nabla (X_\lambda - X_\mu)), X_\lambda - X_\mu}_{2} } \\
%	& \le \abs{(-\Delta)^{-1} \dive (\bb^\ast \bb \nabla (X_\lambda - X_\mu))}_2 \abs{X_\lambda - X_\mu}_2.
%	\end{aligned}
%	\end{multline*} 
	\begin{equation*}
	\begin{split}  
	\abs{\inner{\dive (\bb^\ast \bb \nabla (X_\lambda - X_\mu)), X_\lambda - X_\mu}_{-1} } & \le \abs{\inner{(-\Delta)^{-1} \dive (\bb^\ast \bb \nabla (X_\lambda - X_\mu)), X_\lambda - X_\mu}_{2} } \\
	& \le \abs{(-\Delta)^{-1} \dive (\bb^\ast \bb \nabla (X_\lambda - X_\mu))}_2 \abs{X_\lambda - X_\mu}_2.
	\end{split}
	\end{equation*}
	Exploiting now \Cref{thm:dive-estimates} with $u = X_\lambda - X_\mu$ we get
	\begin{equation*}
	\abs{(-\Delta)^{-1} \dive(\bb^\ast \bb \nabla (X_\lambda - X_\mu))}_2 \le \tilde{C} \gamma \abs{X_\lambda - X_\mu}_2,
	\end{equation*}
	which yields 
	\begin{equation}\label{eq:step1}
	\abs{\inner{\dive (\bb^\ast \bb \nabla (X_\lambda - X_\mu)), X_\lambda - X_\mu}_{-1} } \le \tilde{C} \gamma \abs{X_\lambda - X_\mu}_2^2.
	\end{equation}
	
	\proofpart{2}{Estimate of $\int_0^t \norm{\bb \nabla (X_\lambda(s) - X_\mu(s))}_{-1}^2 ds$.}
	The integrand can be equivalently written as 
	\[ \norm{\bb \nabla (X_\lambda - X_\mu)}_{-1}^2 = \norm{ \bigl( b_j \cdot \nabla (X_\lambda - X_\mu) \bigr)_{j = 1, \ldots, N} }_{-1}^2. \]
	In particular, we can rewrite, for all $j = 1,\ldots,N$,
	\[ b_j \cdot \nabla (X_\lambda - X_\mu) = \dive (b_j (X_\lambda -X_\mu )) - \dive (b_j) (X_\lambda - X_\mu), \]
	so that
	\[ \norm{\bb \nabla (X_\lambda - X_\mu)}_{-1}^2  = \norm{(\dive (b_j (X_\lambda -X_\mu )) - \dive (b_j) (X_\lambda - X_\mu))_{j=1,\ldots,N}}_{-1}^2 . %= \norm{\dive (\bb (X_\lambda - X_\mu)) - (\dive \bb)(X_\lambda - X_\mu)}_{-1}^2, 
	\]
	Hence we get
	\begin{equation} \label{eq:estimate1}
	\norm{\bb \nabla (X_\lambda - X_\mu)}_{-1}^2 \le \abs{\bb}_\infty^2 \abs{X_\lambda - X_\mu}_{2}^2 + \abs{(\dive b_j)_j }_\infty^2 \norm{X_\lambda - X_\mu}_{-1}^2.
	\end{equation} 
	
	\proofpart{3}{Estimate of $- 2 \, \E \int_0^t \inner{\psi_{\lambda} (X_\lambda(s)) - \psi_\mu (X_\mu(s)), X_\lambda (s) - X_\mu (s)}_{2} ds$.}
	We have, for every $\lambda, \mu >0$,
	\begin{equation}  \label{eq:step3}
	- 2 \, \E \int_0^t \inner{\psi_{\lambda} (X_\lambda(s)) - \psi_\mu (X_\mu(s)), X_\lambda (s) - X_\mu (s)}_{2} ds \le K (\lambda + \mu).
	\end{equation} 
	Indeed, we can rewrite
	\[ 
	\begin{split} 
	X_\lambda - X_\mu & {} = X_\lambda - J_\lambda (X_\lambda) + J_\lambda (X_\lambda) - J_\mu (X_\mu) + J_\mu (X_\mu) - X_\mu \\
	& = \lambda \psi_\lambda (X_\lambda) +  J_\lambda (X_\lambda) - J_\mu (X_\mu) - \mu \psi_\mu (X_\mu).
	\end{split} 
	\]
	Thus, since we have $\inner{J_\lambda (X_\lambda)-J_\mu(X_\mu), \psi_\lambda (X_\lambda)-\psi_\mu(X_\mu)}_2 \ge 0$ (see, e.g., \cite[Prop.~2.3]{barbu2010nonlinear}),
	\begin{multline*} 
	\int_0^t \inner{X_\lambda - X_\mu , \psi_\lambda (X_\lambda) - \psi_\mu (X_\mu)}_2 \,  ds \ge \lambda \, \int_0^t \abs{\psi_\lambda (X_\lambda)}_2^2 \, ds \\
	+ \mu \, \int_0^t \abs{\psi_\mu (X_\mu)}_2^2 \, ds %\\ &
	- (\lambda + \mu) \, \int_0^t \abs{\psi_\lambda(X_\lambda)}_2 \abs{\psi_\lambda(X_\mu)}_2 \, ds.
	\end{multline*} 
	By Cauchy's inequality with epsilon we have
	\begin{multline*} 
	(\lambda + \mu) \abs{\psi_\lambda(X_\lambda)}_2 \abs{\psi_\lambda(X_\mu)}_2 \le \\ \le \lambda \left( \abs{\psi_\lambda(X_\lambda)}_2^2 + \frac{1}{4} \abs{\psi_\mu (X_\mu)}_2^2 \right) 
	+ \mu \left( \abs{ \psi_\mu (X_\mu)}_2^2 + \frac{1}{4} \abs{\psi_\lambda (X_\lambda)}_2^2 \right),
	\end{multline*}
	and so,
	\[
	\begin{split} 
	\int_0^t \inner{X_\lambda - X_\mu , \psi_\lambda (X_\lambda) - \psi_\mu (X_\mu)}_2 \,  ds \ge {} & - \frac{1}{4} \int_0^t (\mu \abs{\psi_\lambda (X_\lambda)}_2^2 + \lambda \abs{\psi_\mu (X_\mu)}_2^2) \, ds.
	\end{split} 
	\]
	
	Recalling \Cref{ass: on b}(ii) and exploiting \Cref{thm:growth-yosida}, \Cref{thm:grad-estimates} and Sobolev embedding Theorem, we have
	\begin{equation} \label{eq:est-psi-l2}
	\E\int_0^t \abs{\psi_\lambda (X_\lambda)}_2^2 ds \le C_1 \, \E \int_0^t \int_{\cO} \abs{X_\lambda}^{2m} d\xi \, ds	\le C_2,
	\end{equation} 
	so that
	\[
	\E \int_0^t \inner{X_\lambda - X_\mu , \psi_\lambda (X_\lambda) - \psi_\mu (X_\mu)}_2 \,  ds \ge - \frac{t C_2}{4} (\lambda + \mu ) \ge - \frac{T \, C_2}{4} (\lambda + \mu ),
	\]
	which implies the claim, for every $\mu, \lambda > 0$.
	
	\proofpart{4}{Proof that $(X_\lambda)_\lambda$ is a Cauchy sequence in $L^2(\Omega; L^\infty(\interval{0}{T};H^{-1}(\cO)))$, as $\lambda \to 0$.}
	Consider \cref{eq:norm-diff-h1}, then we can exploit the previous steps of the proof to get, by \cref{eq:step1,eq:estimate1},
	\[ 
	\begin{split} 
	\norm{X_\lambda(t) - X_\mu(t)}_{-1}^2 + & \,  2 \nu \int_0^t \abs{ X_\lambda (s) - X_\mu (s) }^2_{2} ds \le \\ 
	\le & - 2 \int_0^t \inner{\psi_{\lambda} (X_\lambda(s)) - \psi_\mu (X_\mu(s)), X_\lambda (s) - X_\mu (s)}_{2} ds \\
	& + (\tilde{C} \gamma +\abs{\bb}_\infty^2) \int_0^t \abs{X_\lambda(s) - X_\mu(s)}_2^2 ds \\
	& + \abs{(\dive b_j)_j }_\infty^2 \int_0^t \norm{X_\lambda (s) - X_\mu (s)}_{-1}^2 ds \\
	& + 2 \sum_{i=1}^N \int_0^t \inner{ X_\lambda (s) - X_\mu (s) , b_i \cdot \nabla (X_\lambda (s) - X_\mu (s)) \, d \beta_i(s)}_{-1} .
	\end{split}
	\]
	Recalling \eqref{eq:hp-on-b-nu}, and by Burkholder-Davis-Gundy inequality, we have
	\[ 
	\begin{split} 
	\E \sup_{s \in \interval{0}{t}} \norm{X_\lambda(s) - X_\mu(s)}_{-1}^2
	\le & - 2 \, \E \int_0^t \inner{\psi_{\lambda} (X_\lambda(s)) - \psi_\mu (X_\mu(s)), X_\lambda (s) - X_\mu (s)}_{2} ds \\
	& + \abs{(\dive b_j)_j }_\infty^2 \E \int_0^t \norm{X_\lambda (s) - X_\mu (s)}_{-1}^2 ds \\
	& + C_3 \, \E \int_0^t \norm{X_\lambda (s) - X_\mu (s)}_{-1}^2 ds.
	\end{split}
	\]
	Now estimate \eqref{eq:step3} gives
	\[ 
	\begin{split} 
	\E \sup_{s \in \interval{0}{t}} \norm{X_\lambda(s) - X_\mu(s)}_{-1}^2 
	\le & \, K(\lambda + \mu) + C_4 \, \E \int_0^t \norm{X_\lambda (s) - X_\mu (s)}_{-1}^2 ds,
	\end{split}
	\]
	where $C_4 \doteq \abs{(\dive b_j)_j}_\infty^2 + C_3$. 
	Applying Gronwall's inequality we get the result.
	
	Since $(X_\lambda)_\lambda$ is Cauchy in $L^2(\Omega; C(\interval{0}{T};H^{-1}(\cO)))$ and by \cref{eq:est-psi-l2}, we know that, as $\lambda \to 0$,
	\begin{align*}
	& X_\lambda \to X, && \text{in } L^2(\Omega; C(\interval{0}{T};H^{-1}(\cO))), \\
	& X_\lambda \to X, && \text{in } L^2(\Omega \times \interval[open]{0}{T}\times \cO), \\
	& \psi_\lambda (X_\lambda) \wto \eta, && \text{weakly in } L^2(\Omega \times \interval[open]{0}{T} \times \cO),
	\end{align*}
	and $\eta \in \psi (X)$ by the maximal monotonicity of $y \mapsto \psi(y)$ in $L^2(\Omega \times \interval[open]{0}{T} \times \cO)$, indeed $\psi_\lambda (X_\lambda) = \psi ((1+\lambda \psi)^{-1} X_\lambda)$ and $(1+\lambda \psi)^{-1}X_\lambda \to X$ in $L^2(\Omega \times \interval[open]{0}{T} \times \cO)$ as $\lambda \to 0^+$. On the other hand $\psi_\lambda (X_\lambda) = \psi((1+\lambda \psi)^{-1}X_\lambda) \wto \eta$, and so $\eta \in \psi (X)$ a.e.~on $(\Omega \times \interval[open]{0}{T} \times \cO)$, which proves the existence of a solution.
	
	\proofpart{5}{Uniqueness.}
	Consider two solution $X$ and $Y$ of \cref{eq:spde ito}, then, by \ito's formula in $H^{-1}$ and exploiting the estimates in the existence proof, we~have
	\begin{equation*}
	\begin{split} 
	\norm{X(t) - Y(t)}_{-1}^2 & + 2 \nu \int_0^t \abs{ X (s) - Y (s) }^2_{2} ds \le \\ 
	\le & - 2 \int_0^t \inner{\psi (X(s)) - \psi (Y(s)), X (s) - Y (s)}_{2} ds \\
	& + (\tilde{C} \gamma +\abs{\bb}_\infty^2) \int_0^t \abs{X(s) - Y(s)}_2^2 ds \\
	& + \abs{(\dive b_j)_j }_\infty^2 \int_0^t \norm{X (s) - Y (s)}_{-1}^2 ds \\
	& + 2 \sum_{i=1}^N \int_0^t \inner{ X (s) - Y (s) , b_i \cdot \nabla (X (s) - Y (s)) \, d \beta_i(s)}_{-1} .
	\end{split}
	\end{equation*}
	Now, taking expectation and recalling the monotonicity of $\psi$, we obtain
	\begin{multline*}
	\E \norm{X(t) - Y(t)}_{-1}^2 + \left(2 \nu - (\tilde{C} \gamma +\abs{\bb}_\infty^2)\right) \E \int_0^t \abs{ X (s) - Y (s) }^2_{2} ds \le \\ 
	\le \abs{(\dive b_j)_j }_\infty^2 \E \int_0^t \norm{X (s) - Y (s)}_{-1}^2 ds.
	\end{multline*}
	Hence, by \eqref{eq:hp-on-b-nu} we have
	\[ \E \norm{X(t) - Y(t)}_{-1}^2 \le\abs{(\dive b_j)_j }_\infty^2 \int_0^t \E \norm{X (s) - Y (s)}_{-1}^2 ds,  \]
	and Gronwall's inequality yields the result.
\end{proof}

\section{Extinction in finite time for the fast diffusion model} \label{sec:extinction}

Porous media equations of the type
\[ \frac{\partial u}{\partial t} - \Delta \psi(u) = f \]
were first used to describe the dynamics of the flow in a porous medium, see, e.g., \cite{leibenzon1929motion,muskat1937flow}. 
Indeed, the standard model of diffusion of a gas through a porous media is that where
\[ \psi (r) = \rho \abs{r}^{m-1} r, \qquad \text{for every } r \in \R, \]
with $\rho > 0$ and $m > 1$, which is the so-called \emph{slow diffusion} model. 
More generally, one can consider the case of a continuous monotone function satisfying
\[ \rho \abs{r}^{m+1} \le r \psi(r) \le a_1 \abs{r}^{q+1} + a_2 r, \quad \text{for every } r \in \R, \]
for $q > m > 1$, and $\rho, a_1 > 0$.

The case $m \in \interval[open]{0}{1}$, which we are concerned here, is that of the \emph{fast diffusion} model.
This model is relevant in the description of plasma physics, the kinetic theory of gas or fluid transportation in porous media, as suggested in, e.g.,~\cite{berryman1978nonlinear,berryman1983asymptotic}.

The reader is referred to \cite{vazquez2006smoothing,vazquez2007porous} for a complete treatment of porous media equations.

A general feature of the fast diffusion case is that it models diffusion processes with a fast speed of mass transportation and this is one reason why the process terminates within finite time with positive probability. 
This is, in fact, what we are going to show in this section for the Stratonovich gradient noise case. 
The result has been proved for the case of linear multiplicative noise, see \cite{barbu2009finite,barbu2012finite}. 
The approach used in the following is the same as the ones used in those works and in \cite[Ch.~3.7]{barbu2016stochastic}.

So, from now on, we will focus on the fast diffusion model and we will work under the following conditions.
\begin{ass}\label{ass:extinction}
	The following hypotheses hold:
	\begin{myenum}
		\item $\psi$ is as in the fast diffusion model, i.e., 
		\begin{equation}\label{eq:fast-diffusion-psi}
		\psi(r) \doteq \rho \abs{r}^{m-1} r, \quad \text{for every } r \in \R, \text{ with } m \in \interval[open]{0}{1}, \rho > 0.
		\end{equation}
		\item \Cref{ass: on b} (iii)--(iv) hold.
		\item We have
		\begin{equation}\label{eq:hp-on-m-d}
		1 \le d < \frac{2(1+m)}{1-m}.
		\end{equation}
	\end{myenum}
\end{ass}

\begin{rem}
	The function $\psi$ defined in \eqref{eq:fast-diffusion-psi} satisfies (i)--(ii) in \Cref{ass: on b} and so \cref{eq:spde ito} admits a unique solution, according to \Cref{thm:existence}.
\end{rem}

The meaning of assumption \eqref{eq:hp-on-m-d} will be clear after the following lemma, which gives us an estimate on $\norm{X_\lambda(t)}_{-1}^{1-m}$, where $X_\lambda$ is the solution to the approximating problem~\eqref{eq:spde approx} for~$\lambda > 0$.
\begin{lem}
	Suppose \Cref{ass:extinction} holds. 
	Then there exists $K_m>0$ such that, for every $\lambda > 0$ and $0 \le r \le t$,
	\begin{equation}\label{eq:extinction-estimate}
	\begin{split}
	& \norm{X_\lambda (t)}_{-1}^{1-m} + (1-m) \rho \int_r^t e^{K_m (t-s)} \norm{X_\lambda(s)}_{-1}^{-m-1} \abs{X_\lambda(s)}_{m+1}^{m+1} \mathbbm{1}_{\norm{X_\lambda(s)}_{-1} > 0}(s) \, ds \\
	& \le \, e^{K_m (t-r)} \norm{X_\lambda (r)}_{-1}^{1-m} \\
	& + (1-m) \int_r^t e^{K_m (t-s)} \norm{X_\lambda(s)}_{-1}^{-m-1} \inner{X_\lambda(s), \bb \nabla X_\lambda(s) \cdot \left( \mathbbm{1}_{\norm{X_\lambda(s)}_{-1} > 0}(s) \, d \beta (s) \right)}_{-1}.
	\end{split}
	\end{equation}
\end{lem}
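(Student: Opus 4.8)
The plan is to apply Itô's formula to the function $\phi_\varepsilon(X_\lambda(t))$, where $\phi_\varepsilon(v) \doteq (\norm{v}_{-1}^2 + \varepsilon)^{(1-m)/2}$ is a smooth regularization of $v \mapsto \norm{v}_{-1}^{1-m}$ on $H^{-1}(\cO)$, and then let $\varepsilon \to 0$. First I would compute the first and second Fréchet derivatives of $\phi_\varepsilon$ in $H^{-1}$: one has $D\phi_\varepsilon(v) = (1-m)(\norm{v}_{-1}^2+\varepsilon)^{-(1+m)/2} v$ (identifying via the Riesz map) and a second derivative that produces both a term proportional to $(\norm{v}_{-1}^2+\varepsilon)^{-(1+m)/2}$ acting on the quadratic variation and a negative correction term $-(1-m)(1+m)(\norm{v}_{-1}^2+\varepsilon)^{-(3+m)/2}\inner{v,\cdot}_{-1}^2$. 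Feeding in the dynamics of $X_\lambda$ from \cref{eq:spde approx}, the drift contributions are: the dissipative term $-\nu(1-m)(\cdots)^{-(1+m)/2}\inner{X_\lambda, -\Delta X_\lambda}_{-1} = -\nu(1-m)(\cdots)^{-(1+m)/2}\abs{X_\lambda}_2^2 \le 0$; the key term from $\Delta\psi_\lambda(X_\lambda)$, namely $-(1-m)(\cdots)^{-(1+m)/2}\inner{X_\lambda,\psi_\lambda(X_\lambda)}_2$, which after letting $\lambda\to0$ is controlled below by $-(1-m)\rho\,\norm{X_\lambda}_{-1}^{-(1+m)}\abs{X_\lambda}_{m+1}^{m+1}$ (this is where $\psi(r)=\rho|r|^{m-1}r$ gives $\inner{v,\psi(v)}_2 = \rho\abs{v}_{m+1}^{m+1}$); the $\tfrac12\dive(\bb^\ast\bb\nabla X_\lambda)$ term, bounded by $\tfrac12\tilde C\gamma(\cdots)^{-(1+m)/2}\abs{X_\lambda}_2^2$ via \Cref{thm:dive-estimates}; and the Itô correction from the martingale part, bounded by $\tfrac12\abs{\bb}_\infty^2(\cdots)^{-(1+m)/2}\abs{X_\lambda}_2^2$ together with the negative second-derivative correction. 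Crucially, assumption \eqref{eq:hp-on-b-nu}, $\tilde C\gamma + \abs{\bb}_\infty^2 \le 2\nu$, makes the sum of the $\nu$-term, the $\dive$-term and the Itô-correction $\abs{X_\lambda}_2^2$-contributions nonpositive, so those drop out.

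The remaining drift term is the one that must be absorbed into $K_m\,\norm{X_\lambda}_{-1}^{1-m}$: this is precisely where \eqref{eq:hp-on-m-d} enters. After all the favorable terms are discarded one is left needing an estimate of the form (something like) $(\cdots)^{-(1+m)/2}\bigl(\abs{X_\lambda}_2^2 - \rho\abs{X_\lambda}_{m+1}^{m+1}\bigr)$ — or, more carefully, a term coming from interpolating the surviving pieces — by $C\,\norm{X_\lambda}_{-1}^{1-m}$. The standard trick (as in \cite{barbu2009finite}) is a Gagliardo–Nirenberg / interpolation inequality: under $1 \le d < \frac{2(1+m)}{1-m}$ one has a continuous embedding chain allowing $\abs{v}_2$ to be interpolated between $\norm{v}_{-1}$ and $\abs{v}_{m+1}$ (equivalently $\norm{v}_{-1} \le C\abs{v}_{m+1}^{\theta}\abs{v}_2^{1-\theta}$ type bounds, or Young's inequality applied thereafter), so that any leftover positive term is dominated by $\delta\,\norm{X_\lambda}_{-1}^{-(1+m)}\abs{X_\lambda}_{m+1}^{m+1} + K_m\norm{X_\lambda}_{-1}^{1-m}$; choosing $\delta$ small, the first piece is reabsorbed into the good negative term on the left-hand side of \eqref{eq:extinction-estimate}, and $K_m$ is exactly the constant produced here. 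This yields the differential inequality $d\bigl(\norm{X_\lambda}_{-1}^{1-m}\bigr) + (1-m)\rho\,\norm{X_\lambda}_{-1}^{-(1+m)}\abs{X_\lambda}_{m+1}^{m+1}\,dt \le K_m\,\norm{X_\lambda}_{-1}^{1-m}\,dt + (\text{martingale})$, and multiplying by the integrating factor $e^{K_m(t-s)}$ and integrating from $r$ to $t$ gives \eqref{eq:extinction-estimate}.

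Two technical points require care. First, the indicator $\mathbbm{1}_{\norm{X_\lambda(s)}_{-1}>0}$: the functions $\phi_\varepsilon$ are smooth and the computation above is valid for all $\varepsilon>0$; passing to the limit $\varepsilon\to0$ one uses that on the random set $\{\norm{X_\lambda(s)}_{-1}=0\}$ one has $X_\lambda(s)=0$ in $H^{-1}$, hence $\abs{X_\lambda(s)}_{m+1}=0$ and $X_\lambda(s)=0$ in $L^2$ as well, so all integrands vanish there and the indicator can be inserted harmlessly; the dominated convergence needed to pass the limit inside the time integral and the stochastic integral is justified by \Cref{thm:grad-estimates} and \eqref{eq:est-psi-l2}, which give uniform-in-$\lambda$ bounds on $\abs{X_\lambda}_2$ and $\abs{\psi_\lambda(X_\lambda)}_2$, controlling $\abs{X_\lambda}_{m+1}$ through $m<1$ and Hölder. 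Second, strictly speaking Itô's formula in $H^{-1}$ for the non-$C^2$ functional $\norm{\cdot}_{-1}^{1-m}$ is not directly available, which is exactly why one works with $\phi_\varepsilon$ for $\varepsilon>0$ and only removes the regularization at the end. I expect the interpolation step — verifying that \eqref{eq:hp-on-m-d} is precisely the condition under which the leftover positive drift term is absorbable, with the correct exponents — to be the main obstacle; everything else is a fairly mechanical Itô computation combined with \eqref{eq:hp-on-b-nu} to kill the $\abs{X_\lambda}_2^2$ terms.
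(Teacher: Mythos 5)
Your skeleton --- regularizing $\norm{\cdot}_{-1}^{1-m}$ by $\phi_\varepsilon$, applying \ito's formula in $H^{-1}$, using \eqref{eq:hp-on-b-nu} to cancel the $\abs{X_\lambda}_2^2$ contributions, then an integrating factor $e^{K_m(t-s)}$ and $\varepsilon\to 0$ --- is the paper's. But there is a genuine gap in your accounting of the \ito\ correction $\tfrac12\norm{\bb\nabla X_\lambda}_{-1}^2$: you bound it by $\tfrac12\abs{\bb}_\infty^2(\cdots)^{-(1+m)/2}\abs{X_\lambda}_2^2$ alone, and that is not the right estimate. As in Step~2 of the proof of \Cref{thm:existence}, one writes $b_j\cdot\nabla u=\dive(b_j u)-(\dive b_j)\,u$ and obtains \eqref{eq:estimate1}, i.e.\ $\norm{\bb\nabla u}_{-1}^2\le \abs{\bb}_\infty^2\abs{u}_2^2+\abs{(\dive b_j)_j}_\infty^2\norm{u}_{-1}^2$. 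Only the first summand is cancelled by \eqref{eq:hp-on-b-nu}; the second survives and, after the elementary bound $(\norm{X_\lambda}_{-1}^2+\varepsilon^2)^{-(1+m)/2}\norm{X_\lambda}_{-1}^2\le\norm{X_\lambda}_{-1}^{1-m}$, produces precisely the Gronwall term $\tfrac{1-m}{2}\abs{(\dive b_j)_j}_\infty^2\norm{X_\lambda}_{-1}^{1-m}$ whose coefficient is the $K_m$ of the statement, $K_m=(1-m)\abs{(\dive b_j)_j}_\infty^2/2$. In your write-up this term is absent, so your argument has no source for $K_m$.

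To compensate, you posit a leftover positive drift term to be absorbed by a Gagliardo--Nirenberg interpolation under \eqref{eq:hp-on-m-d}, and you flag that step as the main obstacle. No such term exists: once \eqref{eq:hp-on-b-nu} is invoked, every $\abs{X_\lambda}_2^2$ contribution is nonpositive and is simply discarded, while the good term $(1-m)\rho(\cdots)^{-(1+m)/2}\abs{X_\lambda}_{1+m}^{1+m}$ is kept on the left-hand side of \eqref{eq:extinction-estimate}, not interpolated away. Condition \eqref{eq:hp-on-m-d} plays no role in this lemma; it enters only afterwards, to obtain the lower bound \eqref{eq:sobolev-inequality}, $\norm{y}_{-1}^{-1-m}\abs{y}_{1+m}^{1+m}\ge C_m$, which feeds \Cref{thm:extinction-fast-diffusion}. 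A smaller point: the identity $\inner{X_\lambda,\psi_\lambda(X_\lambda)}_2=\rho\abs{X_\lambda}_{1+m}^{1+m}$ is used for each fixed $\lambda>0$; your phrasing ``after letting $\lambda\to0$'' is out of place here, since \eqref{eq:extinction-estimate} is asserted for every $\lambda>0$ and the passage $\lambda\to0$ happens only in the proof of the extinction theorem.
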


\begin{proof}
	In order to get the estimate on $\norm{X_\lambda(t)}_{-1}^{1-m}$ we start estimating $\phi_\varepsilon (X_\lambda(t))$, where, for any $\varepsilon>0$,
	\[ \phi_\varepsilon (y) \doteq (\norm{y}_{-1}^2 + \varepsilon^2)^{-\frac{1+m}{2}}. \]
	Notice that
	\begin{align*}
	D\phi_\varepsilon(y) & = (1-m)\left( \norm{y}_{-1}^2 + \varepsilon^2 \right)^{-\frac{1+m}{2}}y, \\
	D^2 \phi_\varepsilon (y) & = (1-m) \left( \norm{y}_{-1}^2 + \varepsilon \right)^{-\frac{1+m}{2}} - (1-m^2) \left( \norm{y}_{-1}^2 + \varepsilon^2 \right)^{-\frac{3+m}{2}} y \otimes y.
	\end{align*}
	By \ito's formula we have
	\[
	\begin{split} 
	d \phi_\varepsilon(X_\lambda (t)) & - (1-m)\left( \norm{X_\lambda(t)}_{-1}^2 + \varepsilon^2 \right)^{-\frac{1+m}{2}} \inner{ X_\lambda(t), \nu \Delta X_\lambda(t) + \Delta \psi_\lambda(X_\lambda(t)) }_{-1} dt \\
	= & \, (1-m)\left( \norm{X_\lambda(t)}_{-1}^2 + \varepsilon^2 \right)^{-\frac{1+m}{2}} \inner{X_\lambda(t), \frac{1}{2} \dive (\bba \bb \nabla X_\lambda(t)) }_{-1} dt \\
	& + \frac{1}{2} (1-m) \left( \norm{X_\lambda(t)}_{-1}^2 + \varepsilon \right)^{-\frac{1+m}{2}} \norm{\bb \nabla X_\lambda(t)}_{-1}^2 dt \\
	& - \frac{1}{2} (1-m^2) \left( \norm{X_\lambda(t)}_{-1}^2 + \varepsilon^2 \right)^{-\frac{3+m}{2}} \norm{X_\lambda(t)}_{-1}^2 \norm{\bb \nabla X_\lambda(t)}_{-1} dt \\
	& + (1-m) \left( \norm{X_\lambda(t)}_{-1}^2 + \varepsilon^2 \right)^{-\frac{1+m}{2}} \inner{X_\lambda(t),\bb \nabla X_\lambda(t) \cdot d \beta (t)}_{-1},
	\end{split} 
	\]
	which can be rewritten as
	\[
	\begin{split} 
	d \phi_\varepsilon(X_\lambda (t)) & + (1-m) \nu \left( \norm{X_\lambda(t)}_{-1}^2 + \varepsilon^2 \right)^{-\frac{1+m}{2}} \abs{X_\lambda(t)}_{2}^2 dt \\ 
	& + (1-m)\left( \norm{X_\lambda(t)}_{-1}^2 + \varepsilon^2 \right)^{-\frac{1+m}{2}} \inner{ X_\lambda(t), \psi_\lambda(X_\lambda(t)) }_{2} dt \\
	= & \, \frac{1}{2} (1-m)\left( \norm{X_\lambda(t)}_{-1}^2 + \varepsilon^2 \right)^{-\frac{1+m}{2}} \inner{X_\lambda(t), \dive (\bba \bb \nabla X_\lambda(t)) }_{-1} dt \\
	& + \frac{1}{2} (1-m) \left( \norm{X_\lambda(t)}_{-1}^2 + \varepsilon \right)^{-\frac{1+m}{2}} \norm{\bb \nabla X_\lambda(t)}_{-1}^2 dt \\
	& - \frac{1}{2} (1-m^2) \left( \norm{X_\lambda(t)}_{-1}^2 + \varepsilon^2 \right)^{-\frac{3+m}{2}} \norm{X_\lambda(t)}_{-1}^2 \norm{\bb \nabla X_\lambda(t)}_{-1} dt \\
	& + (1-m) \left( \norm{X_\lambda(t)}_{-1}^2 + \varepsilon^2 \right)^{-\frac{1+m}{2}} \inner{X_\lambda(t),\bb \nabla X_\lambda(t) \cdot d \beta (t)}_{-1}.
	\end{split} 
	\]
	Now noticing that $\inner{X_\lambda, \psi_\lambda(X_\lambda)}_2 = \rho \abs{X_\lambda}_{1+m}^{1+m}$ and estimating the terms on the right-hand side as we did in the proof of \Cref{thm:existence}, we get
	\[ 
	\begin{split} 
	d \phi_\varepsilon(X_\lambda (t)) & + (1-m) \nu \left( \norm{X_\lambda(t)}_{-1}^2 + \varepsilon^2 \right)^{-\frac{1+m}{2}} \abs{X_\lambda(t)}_{2}^2 dt \\ 
	& + (1-m) \rho \left( \norm{X_\lambda(t)}_{-1}^2 + \varepsilon^2 \right)^{-\frac{1+m}{2}} \abs{X_\lambda(t)}_{1+m}^{1+m} dt \\
	\le & \, \frac{1}{2} (1-m) \tilde{C} \gamma \left( \norm{X_\lambda(t)}_{-1}^2 + \varepsilon^2 \right)^{-\frac{1+m}{2}}  \abs{X_\lambda (t)}_2^2 dt \\
	& + \frac{1}{2} (1-m) \abs{\bb}_\infty^2 \left( \norm{X_\lambda(t)}_{-1}^2 + \varepsilon^2 \right)^{-\frac{1+m}{2}} \abs{X_\lambda(t)}_2^2 dt \\
	& + \frac{1}{2} (1-m) \abs{(\dive b_j)_j}_\infty^2 \left( \norm{X_\lambda(t)}_{-1}^2 + \varepsilon^2 \right)^{-\frac{1+m}{2}} \norm{X_\lambda(t)}_{-1}^2 dt \\
	& + (1-m) \left( \norm{X_\lambda(t)}_{-1}^2 + \varepsilon^2 \right)^{-\frac{1+m}{2}} \inner{X_\lambda(t),\bb \nabla X_\lambda(t) \cdot d \beta (t)}_{-1}.
	\end{split} 
	\]
	Let $C_1 = \nu - \frac{\tilde{C} \gamma + \abs{\bb}_\infty^2}{2}$ and $C_2=\abs{(\dive b_j)_j}_\infty^2$, then integrating with respect to time from $r$ to $t$, we have
	\[ 
	\begin{split} 
	\norm{X_\lambda (t)}_{-1}^{1-m} & + (1-m) C_1 \int_r^t \left( \norm{X_\lambda(s)}_{-1}^2 + \varepsilon^2 \right)^{-\frac{1+m}{2}} \abs{X_\lambda(s)}_{2}^2 ds \\ 
	& + (1-m) \rho \int_r^t \left( \norm{X_\lambda(s)}_{-1}^2 + \varepsilon^2 \right)^{-\frac{1+m}{2}} \abs{X_\lambda(s)}_{1+m}^{1+m} ds \\
	\le & \, \norm{X_\lambda (r)}_{-1}^{1-m} 
	+ \frac{C_2 (1-m)}{2} \int_r^t \left( \norm{X_\lambda(s)}_{-1}^2 + \varepsilon^2 \right)^{-\frac{1+m}{2}} \norm{X_\lambda(s)}_{-1}^2 ds \\
	& + (1-m) \int_r^t \left( \norm{X_\lambda(s)}_{-1}^2 + \varepsilon^2 \right)^{-\frac{1+m}{2}} \inner{X_\lambda(s),\bb \nabla X_\lambda(s) \cdot d \beta (s)}_{-1}.
	\end{split} 
	\]
	Taking $\varepsilon \to 0$ yields
	\[ 
	\begin{split} 
	\norm{X_\lambda (t)}_{-1}^{1-m} & + (1-m) C_1 \int_r^t \norm{X_\lambda(s)}_{-1}^{-1-m} \abs{X_\lambda(s)}_{2}^2 \mathbbm{1}_{\norm{X_\lambda(s)}_{-1} > 0}(s) \, ds \\ 
	& + (1-m) \rho \int_r^t \norm{X_\lambda(s)}_{-1}^{-1-m} \abs{X_\lambda(s)}_{1+m}^{1+m} \mathbbm{1}_{\norm{X_\lambda(s)}_{-1} > 0}(s) \, ds \\
	\le & \, \norm{X_\lambda (r)}_{-1}^{1-m} 
	+ \frac{C_2 (1-m)}{2} \int_r^t \norm{X_\lambda(s)}_{-1}^{1-m} ds \\
	& + (1-m) \int_r^t \norm{X_\lambda(s)}_{-1}^{-1-m} \inner{X_\lambda(s),\bb \nabla X_\lambda(s) \cdot \left( \mathbbm{1}_{\norm{X_\lambda(s)}_{-1} > 0}(s) \, d \beta (s) \right)}_{-1}.
	\end{split} 
	\]
	Set now $K_m = C_2(1-m)/2$. By the stochastic Gronwall's lemma we have
	\begin{multline*} 
	\begin{aligned} 
	\norm{X_\lambda (t)}_{-1}^{1-m} & + (1-m) C_1 \int_r^t e^{K_m (t-s)} \norm{X_\lambda(s)}_{-1}^{-1-m} \abs{X_\lambda(s)}_{2}^2 \mathbbm{1}_{\norm{X_\lambda(s)}_{-1} > 0}(s) \, ds \\ 
	&+ (1-m) \rho \int_r^t e^{K_m (t-s)} \norm{X_\lambda(s)}_{-1}^{-1-m} \abs{X_\lambda(s)}_{1+m}^{1+m} \mathbbm{1}_{\norm{X_\lambda(s)}_{-1} > 0}(s) \, ds 
	\end{aligned} \\
	\begin{aligned} 
	\le & \, e^{K_m (t-r)} \norm{X_\lambda (r)}_{-1}^{1-m} + (1-m) \int_r^t e^{K_m (t-s)} \\
	& \norm{X_\lambda(s)}_{-1}^{-1-m} \inner{X_\lambda(s),\bb \nabla X_\lambda(s) \cdot \left( \mathbbm{1}_{\norm{X_\lambda(s)}_{-1} > 0}(s) \, d \beta (s) \right)}_{-1}.
	\end{aligned} 
	\end{multline*} 
	Recalling that $C_1 > 0$ because of \eqref{eq:hp-on-b-nu}, we get \eqref{eq:extinction-estimate}.
\end{proof}

For the next step, we need an additional assumption concerning $d$ and $m$. 
In particular, we would like to have a constant $C_m \ge 0$ such that
\begin{equation} \label{eq:sobolev-inequality}
\norm{y}_{-1}^{-1-m} \abs{y}_{1+m}^{1+m} \ge C_m, \quad \text{for every } y \in H^{-1}, 
\end{equation} 
which is equivalent to have 
\[ L^{1+m}(\cO) \subset H^{-1}(\cO), \quad \text{with continuous embedding}. \]
By duality, this is equivalent to
\[ H^1_0(\cO) \subset L^{\frac{1+m}{m}}(\cO), \quad \text{with continuous embedding}, \]
hence, by Sobolev embedding Theorem, we have that \eqref{eq:sobolev-inequality} holds provided
\[ \frac{m}{m+1} > \frac{1}{2} - \frac{1}{d}, \]
which explains the meaning of hypothesis \eqref{eq:hp-on-m-d}.

We are now ready to prove our extinction in finite time result.
\begin{thm}\label{thm:extinction-fast-diffusion}
	Suppose \Cref{ass:extinction} holds. 
	Let $X$ be a solution to \cref{eq:spde ito} and set $\tau_x \doteq \inf \{ t > 0 \colon X(t)=0\}$, then $X(t,\omega) = 0$ for every $t > \tau_x(\omega)$.
	Moreover, the extinction probability is finite and
	\begin{equation}\label{eq:extinction-prob}
	\Pr (\tau_x > t) \le \frac{K_m \norm{x}_{-1}^{1-m}}{\rho \, C_m \, (1-m) \left( 1 - e^{-K_m t} \right)}.
	\end{equation}
\end{thm}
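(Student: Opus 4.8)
The plan is to push the a priori bound \eqref{eq:extinction-estimate} for the approximations $X_\lambda$ to the limit $\lambda\to 0$, turning it into a conditional supermartingale-type inequality for $\norm{X(t)}_{-1}^{1-m}$, and then to read off both assertions: the almost sure persistence of $X$ at $0$ from the absorbing property of nonnegative supermartingales, and the estimate \eqref{eq:extinction-prob} by restricting that inequality to the event $\{\tau_x>t\}$. Concretely, in \eqref{eq:extinction-estimate} (where the term with $C_1>0$ has already been discarded) I would use the embedding inequality \eqref{eq:sobolev-inequality}, legitimate thanks to \eqref{eq:hp-on-m-d}, to bound the remaining integral from below by $(1-m)\rho C_m\int_r^t e^{K_m(t-s)}\mathbbm{1}_{\norm{X_\lambda(s)}_{-1}>0}\,ds$. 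Since the singular weight $\norm{X_\lambda(s)}_{-1}^{-1-m}$ makes the Itô term only a local martingale, I would take expectations not on \eqref{eq:extinction-estimate} itself but on its $\varepsilon$-regularized precursor (written in terms of $\phi_\varepsilon$), where the stochastic integrand is bounded, conditioning on $\cF_r$ to kill the martingale increment and only afterwards sending $\varepsilon\to 0$; this gives, for all $0\le r\le t$ and $\lambda>0$,
\[
\E\!\left[\norm{X_\lambda(t)}_{-1}^{1-m}\,\big|\,\cF_r\right]+(1-m)\rho C_m\,\E\!\left[\int_r^t e^{K_m(t-s)}\mathbbm{1}_{\norm{X_\lambda(s)}_{-1}>0}\,ds\,\Big|\,\cF_r\right]\le e^{K_m(t-r)}\norm{X_\lambda(r)}_{-1}^{1-m}.
\]

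Next I would pass to the limit $\lambda\to 0$. From the proof of \Cref{thm:existence}, $X_\lambda\to X$ in $L^2(\Omega;C(\interval{0}{T};H^{-1}(\cO)))$; since $a\mapsto a^{1-m}$ is concave and $(1-m)$-Hölder, $\norm{X_\lambda(t)}_{-1}^{1-m}\to\norm{X(t)}_{-1}^{1-m}$ in $L^1(\Omega)$ uniformly in $t$, while along a subsequence $\norm{X_\lambda(s)}_{-1}\to\norm{X(s)}_{-1}$ for a.e.\ $(\omega,s)$, so that $\liminf_\lambda\mathbbm{1}_{\norm{X_\lambda(s)}_{-1}>0}\ge\mathbbm{1}_{\norm{X(s)}_{-1}>0}$ and Fatou's lemma bounds the integral term from below. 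Using the $L^1$-continuity of conditional expectation, the displayed inequality then holds with $X$ in place of $X_\lambda$, for all $0\le r\le t$, $\Pr$-a.s.

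From this I obtain the two claims. Dropping the nonnegative integral term shows that $Y_t\doteq e^{-K_m t}\norm{X(t)}_{-1}^{1-m}$ is a nonnegative, pathwise continuous supermartingale (integrable because $X\in L^2(\Omega;L^\infty(0,T;H^{-1}))$ and $1-m<2$). A nonnegative continuous supermartingale is absorbed at $0$: applying optional sampling at $\tau_x\wedge u$ for rational $u$ and invoking continuity of paths yields $Y_t=0$ for all $t\ge\tau_x$ a.s., i.e.\ $X(t,\omega)=0$ for every $t>\tau_x(\omega)$. For \eqref{eq:extinction-prob} I would take $r=0$, discard $\E\norm{X(t)}_{-1}^{1-m}\ge 0$, and observe that on $\{\tau_x>t\}$ one has $\norm{X(s)}_{-1}>0$ for all $s\in\interval{0}{t}$, so the integrand there equals $e^{K_m(t-s)}$ and $\int_0^t e^{K_m(t-s)}\,ds=(e^{K_m t}-1)/K_m$; hence
\[
(1-m)\rho C_m\,\frac{e^{K_m t}-1}{K_m}\,\Pr(\tau_x>t)\le(1-m)\rho C_m\,\E\!\int_0^t e^{K_m(t-s)}\mathbbm{1}_{\norm{X(s)}_{-1}>0}\,ds\le e^{K_m t}\norm{x}_{-1}^{1-m},
\]
and rearranging, with $e^{K_m t}/(e^{K_m t}-1)=1/(1-e^{-K_m t})$, yields exactly \eqref{eq:extinction-prob}.

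The genuinely delicate points are all in the limit passage: one must be careful that the Itô integral in \eqref{eq:extinction-estimate} is only a local martingale, so the expectation has to be taken at the $\phi_\varepsilon$ level with a clean justification of $\varepsilon\to 0$; and in the limit $\lambda\to 0$ the indicator $\mathbbm{1}_{\norm{X_\lambda(s)}_{-1}>0}$ is merely lower semicontinuous, so only a one-sided (Fatou) estimate survives — luckily the side that is needed. The absorbing-at-zero step also calls for the usual care with null sets and with path continuity in $H^{-1}$.
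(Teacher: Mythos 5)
Your proposal is correct and follows essentially the same route as the paper: pass \eqref{eq:extinction-estimate} to the limit $\lambda\to 0$, invoke \eqref{eq:sobolev-inequality}, deduce that $e^{-K_m t}\norm{X(t)}_{-1}^{1-m}$ is a nonnegative supermartingale (the paper phrases this as $\norm{Y(t)}_{-1}^{1-m}$ with $Y(t)=e^{-C_2 t/2}X(t)$), and obtain \eqref{eq:extinction-prob} by setting $r=0$ and taking expectations. The additional care you take with the local-martingale issue at the $\phi_\varepsilon$ level, with the lower semicontinuity of the indicator in the $\lambda\to 0$ limit via Fatou, and your direct use of the inclusion $\{\tau_x>t\}\subseteq\{\norm{X(s)}_{-1}>0 \text{ for a.e. } s\le t\}$ are legitimate refinements of steps the paper performs without comment, and they lead to the same constant in \eqref{eq:extinction-prob}.
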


\begin{proof}
	By \eqref{eq:extinction-estimate}, taking into account \eqref{eq:sobolev-inequality}, we have for every $t \ge r \ge 0$
	\begin{multline*}
	\norm{X_\lambda (t)}_{-1}^{1-m} + C_m (1-m) \rho \int_r^t e^{K_m (t-s)} \mathbbm{1}_{\norm{X_\lambda(s)}_{-1} > 0}(s) \, ds 
	\le  \, e^{K_m (t-r)} \norm{X_\lambda (r)}_{-1}^{1-m} \\
	+ (1-m) \int_r^t e^{K_m (t-s)} \norm{X_\lambda(s)}_{-1}^{-m-1} \inner{X_\lambda(s), \bb \nabla X_\lambda(s) \cdot \left( \mathbbm{1}_{\norm{X_\lambda(s)}_{-1} > 0}(s) \, d \beta (s) \right)}_{-1}.
	\end{multline*}
	Let $\lambda \to 0$ to find
	\begin{multline*}
	\norm{X (t)}_{-1}^{1-m} + C_m (1-m) \rho \int_r^t e^{K_m (t-s)} \mathbbm{1}_{\norm{X (s)}_{-1} > 0}(s) \, ds 
	\le  \, e^{K_m (t-r)} \norm{X (r)}_{-1}^{1-m} \\
	+ (1-m) \int_r^t e^{K_m (t-s)} \norm{X(s)}_{-1}^{-m-1} \inner{X(s), \bb \nabla X(s) \cdot \left( \mathbbm{1}_{\norm{X(s)}_{-1} > 0}(s) \, d \beta (s) \right)}_{-1},
	\end{multline*}
	which can be equivalently written as
	\begin{multline}\label{eq:extinction-proof}
	e^{-K_m t} \norm{X (t)}_{-1}^{1-m} + C_m (1-m) \rho \int_r^t e^{-K_m s} \mathbbm{1}_{\norm{X (s)}_{-1} > 0}(s) \, ds 
	\le  \, e^{-K_m r} \norm{X (r)}_{-1}^{1-m} \\
	+ (1-m) \int_r^t e^{-K_m s} \norm{X(s)}_{-1}^{-m-1} \inner{X(s), \bb \nabla X(s) \cdot \left( \mathbbm{1}_{\norm{X(s)}_{-1} > 0}(s) \, d \beta (s) \right)}_{-1}.
	\end{multline}
	Recalling that $K_m = C_2 (1-m) / 2$ and defining $Y(t) \doteq e^{-C_2 t/2} X(t)$, this proves that the process $\norm{Y(t)}_{-1}^{1-m}$ is a nonnegative supermartingale, i.e.,
	\[ \E \left[ \norm{Y(t)}_{-1}^{1-m} \vert \, \cF_r \right] \le \norm{Y(r)}_{-1}^{1-m}, \quad \text{for every } t \ge r. \]
	This implies, for any couple of stopping times $\tau_1$ and $\tau_2$, that
	\[ \tau_1 > \tau_2 \, \Rightarrow \, \norm{Y(\tau_1)}_{-1} \le \norm{Y(\tau_2)}_{-1}, \]
	and, in particular, for any $t > \tau_x = \inf\{t > 0\colon X(t) = 0\} $, we have
	\[ \norm{Y(t)}_{-1} \le \norm{Y(\tau_x)}_{-1} = 0, \]
	that is,
	\[ \norm{X(t)}_{-1} = \norm{X(\tau_x)}_{-1} = 0, \quad \Pr\text{-a.s.} \]
	%which implies that $\norm{X(t)}_{-1} = 0$ for $t \ge \tau_x$.
	
	Now set $r = 0$ in \eqref{eq:extinction-proof} and take the expectation to get
	\[ \E \norm{Y(t)}_{-1}^{1-m} + C_m (1-m) \rho \int_0^t e^{-K_m s} \, \Pr (\tau_x > s) \, ds \le \norm{x}_{-1}^{1-m}, \]
	which gives 
	\[ \E \norm{Y(t)}_{-1}^{1-m} + C_m (1-m) \rho \int_0^t e^{-K_m s} \, \Pr (\tau_x > t) \, ds \le \norm{x}_{-1}^{1-m}, \]
	and \eqref{eq:extinction-prob} follows.
\end{proof}

\section{Self-organized criticality model} \label{sec:soc}

In this section we will introduce the \emph{self-organized criticality} (SOC) model, which is a special case of porous media equation with
\[ \psi(r) = \rho \sign r + \phi(r), \quad \text{for all } r \in \R, \]
where $\rho > 0$, $\phi$ a maximal monotone graph in $\R \times \R$ and
\[ \sign r = \begin{cases}
\dfrac{r}{\abs{r}}, & \text{for } r \ne 0, \\[6pt]
\{ r \in \R; \abs{r} \le 1 \}, & \text{for } r = 0.
\end{cases} \]
Such a choice of $\psi$ represents the so-called \emph{sand-pile model} or \emph{Bak-Tang-Wiesenfeld model}. 
The deterministic version of the sand-pile model was first introduced in~\cite{bak1987self,bak1988selforganized}, while its stochastic counterpart has been studied, e.g., in~\cite{barbu2010selforganized,barbu2009stochastic,barbu2012stochastic}.

In the following we formalize the deterministic model referring to the method treated in~\cite{bantay1992selforganization}.
Let $\cO$ be an $N \times N$ discrete region of points, we label each of those points with an integer index $i \in \{1,\ldots,N^2\}$. 
We associate a height, $X_i(t)$, to every index $i$ at a certain time $t$.
Now, we can select, randomly, a site $i$ and increase $X_i(t)$ by $1$, leaving the other sites unchanged.
A \emph{toppling} event occurs if the height at a site exceed of a given \emph{critical value} $X_c$. A site whose height is greater than $X_c$ is called \emph{activated site}.

If $X_i(t) > X_c$, then
\begin{equation} \label{eq: btw model discrete point}
X_j(t+1) \to X_j(t) - Z_{ij}, \quad j = 1,\ldots,N^2, 
\end{equation} 
where $Z = (Z_{ij})_{ij}$ is a $N^2 \times N^2$ matrix such that
\[ Z_{ij} =
\begin{cases}
4, & \text{if } i = j, \\
-1, & \text{if } i \text{ and } j \text{ nearest neighbours}, \\
0, & \text{otherwise}.
\end{cases}
\]
Consider $X(t) = (X_i(t))_i$, then, the dynamic of the system can be written, starting from equation~\eqref{eq: btw model discrete point}, as
\begin{equation}\label{eq: btw model discrete}
X(t+1) = X(t)-Z f(t),
\end{equation}
where $f(t) = (f_i(t))_i = \bigl( H(X_i(t)-X_c) \bigr)_i$ and $H$ is the \emph{Heaviside function} defined as
\[ H(x) = \begin{cases}
1 & \text{if } x \ge 0,\\
0 & \text{if } x < 0.
\end{cases} \]
Noticing that the matrix $Z$ is a discretized version of the Laplace operator $\Delta$, we can claim that equation~\eqref{eq: btw model discrete} is the discrete version of the following partial differential equation for $X\colon \interval[open]{0}{+\infty} \times \cO \to \R$,
\begin{equation} \label{eq:soc pde}
\frac{\partial X}{\partial t}(t) = \Delta H(X(t)-X_c), \quad (t,\xi) \in \interval[open]{0}{+\infty} \times \cO, 
\end{equation} 
where $\cO \subset \R^2$ is a continuous spatial domain, $\Delta$ is the $2$-dimensional Laplace operator and $H$ is the Heaviside function.
More generally, one can consider $\cO \subset \R^d$, $d = 1,2,3$, and replace $H$ by a continuous function with jump at~$0$.
One has to associate to equation~\eqref{eq:soc pde} an initial value condition 
\begin{equation*}
X(0,\xi) = X_0(\xi), \quad \xi \in \cO,
\end{equation*}
with $X_0 \colon \cO \to \R$ representing the initial configuration of the system, and boundary conditions on $\partial \cO$, a common one being the Dirichlet condition
\begin{equation*}
X(t,\xi) = 0, \quad (t,\xi) \in \interval[open]{0}{+\infty} \times \partial \cO.
\end{equation*}

We want now to treat \cref{eq:spde ito} in the particular case of self-organized criticality. 
If we consider 
\[ \psi(r) = \rho \sign (r), \quad r \in \R, \]
where $\rho > 0$, then
\Cref{thm:existence} still applies, since $\psi$ satisfies the hypotheses therein.

\Cref{thm:extinction-fast-diffusion} holds with the same proof in the case $m=0$, which is exactly the case of self-organized criticality, since
\[ \psi(r) = \rho \abs{r}^{m-1} r = \rho \abs{r}^m \sign(r). \] 
However, under this assumption, condition \eqref{eq:hp-on-m-d} imposes $d = 1$.

\section{Concluding remarks} \label{sec:conclusion}

\Cref{thm:existence} provides a nice existence result for \cref{eq:spde strato}, however, as we pointed out in \Cref{sec:framework}, hypothesis \eqref{eq:hp-on-b-nu} is quite restrictive, since it imposes in particular that $\nu > 0$ to avoid the loss of the noise term. 
It could be interesting to see if it is possible to gain existence even with $\nu = 0$, but keeping the noise.

As regards the asymptotic behaviour of solutions, \Cref{thm:extinction-fast-diffusion} ensures extinction in finite time of the solution to the fast diffusion model, while for the SOC model, in \Cref{sec:soc}, we only have the result for $d =1$.
One may wonder what happens in the case $d \ge 2$, does extinction in finite time phenomenon still take place?
Some asymptotic results for the case of SOC in stochastic porous media equations of the type
\[ dX - \Delta \psi(X) \, dt = \sigma (X) \, dW, \]
have been provided by V.~Barbu, G.~Da Prato, and M.~R\"ockner in~\cite[Ch.~3.8]{barbu2016stochastic} as well as B.~Gess in~\cite{gess2015finite}, the latter guaranteeing, under some suitable assumptions, the extinction in finite time of solutions also for $d > 1$.
However, in the case of Stratonovich gradient noise, what happens for $d>1$ is still to be proved, up to our knowledge, and it could be the next step to be tackled in future works.

\section*{Acknowledgement}

This work has been carried out during my master's degree studies at the University of Verona. 
I am sincerely grateful to Luca Di Persio for having introduced me to the problem and helped me through its resolution.

\bibliographystyle{hplain}
\bibliography{soc.bib} 

\begin{thebibliography}{10}

\bibitem{bak1987self}
P.~Bak, C.~Tang, and K.~Wiesenfeld.
\newblock Self-organized criticality: an explanation of the $1/f$ noise.
\newblock {\em Phys. Rev. Lett.}, 59(4):381--384, 1987.

\bibitem{bak1988selforganized}
P.~Bak, C.~Tang, and K.~Wiesenfeld.
\newblock Self-organized criticality.
\newblock {\em Phys. Rev. A}, 38(1):364--374, 1988.

\bibitem{bantay1992selforganization}
P.~Bantay and M.~Janosi.
\newblock Self-organization and anomalous diffusion.
\newblock {\em Physica A: Stat. Mech. Appl.}, 185(1):11--18, 1992.

\bibitem{barbu2010nonlinear}
V.~Barbu.
\newblock {\em Nonlinear differential equations of monotone types in Banach
  spaces}.
\newblock Monographs in Mathematics. Springer-Verlag New York, 2010.

\bibitem{barbu2010selforganized}
V.~Barbu.
\newblock Self-organized criticality and convergence to equilibrium of
  solutions to nonlinear diffusion equations.
\newblock {\em Annu. Rev. Control}, 34(1):52--61, 2010.

\bibitem{barbu2013existence}
V.~Barbu, Z.~Brze\'{z}niak, E.~Hausenblas, and L.~Tubaro.
\newblock Existence and convergence results for infinite dimensional nonlinear
  stochastic equations with multiplicative noise.
\newblock {\em Stoch. Process. Their Appl.}, 123(3):934--951, 2013.

\bibitem{barbu2017stochastic}
V.~Barbu, Z.~Brze\'{z}niak, and L.~Tubaro.
\newblock Stochastic nonlinear parabolic equations with {Stratonovich} gradient
  noise.
\newblock {\em Appl. Math. Opt.}, pages 1--17, 2017.

\bibitem{barbu2009finite}
V.~Barbu, G.~Da~Prato, and M.~R\"{o}ckner.
\newblock Finite time extinction for solutions to fast diffusion stochastic
  porous media equations.
\newblock {\em C.\ R.\ Acad.\ Sci.\ Paris, Ser.~1}, 347(1--2):81--84, 2009.

\bibitem{barbu2009stochastic}
V.~Barbu, G.~Da~Prato, and M.~R\"{o}ckner.
\newblock Stochastic porous media equations and self-organized criticality.
\newblock {\em Commun. Math. Phys.}, 285(3):901--923, 2009.

\bibitem{barbu2012finite}
V.~Barbu, G.~Da~Prato, and M.~R\"ockner.
\newblock Finite time extinction of solutions to fast diffusion equations
  driven by linear multiplicative noise.
\newblock {\em J. Math. Anal. Appl.}, 389(1):147--164, 2012.

\bibitem{barbu2016stochastic}
V.~Barbu, G.~Da~Prato, and M.~R\"ockner.
\newblock {\em Stochastic porous media equations}, volume 2163 of {\em Lecture
  notes in Mathematics}.
\newblock Springer International Publishing, 2016.

\bibitem{barbu2012stochastic}
V.~Barbu and M.~R\"ockner.
\newblock Stochastic porous media equations and self-organized criticality:
  Convergence to the critical state in all dimensions.
\newblock {\em Commun. Math. Phys.}, 311(2):539--555, 2012.

\bibitem{berryman1978nonlinear}
J.G. Berryman and C.J. Holland.
\newblock Nonlinear diffusion problems arising in plasma physics.
\newblock {\em Phys. Rev. Lett.}, 40:1720--1722, 1978.

\bibitem{berryman1983asymptotic}
J.G. Berryman and C.J. Holland.
\newblock Asymptotic behavior of the nonlinear diffusion equations.
\newblock {\em J. Math. Phys.}, 54:425--426, 1983.

\bibitem{ciotir2016nonlinear}
I.~Ciotir and J.M. T\"{o}lle.
\newblock Nonlinear stochastic partial differential equations with singular
  diffusivity and gradient {Stratonovich} noise.
\newblock {\em J. Funct. Anal.}, 271(7):1764--1792, 2016.

\bibitem{gess2015finite}
B.~Gess.
\newblock Finite time extinction for stochastic sign fast diffusion and
  self-organized criticality.
\newblock {\em Comm. Math. Phys.}, 335(5--6):309--344, 2015.

\bibitem{krylov1981stochastic}
N.V. Krylov and B.L. Rozovskii.
\newblock Stochastic evolution equations.
\newblock {\em J. Soviet Math.}, 16(4):1233--1277, 1981.

\bibitem{leibenzon1929motion}
L.S. Leibenzon.
\newblock The motion of a gas in a porous medium.
\newblock In {\em Complete Works}, volume~2. Acad. Sciences URSS, 1930.

\bibitem{liu2015stochastic}
W.~Liu and M.~R\"{o}ckner.
\newblock {\em Stochastic partial differential equations: an introduction}.
\newblock Universitext. Springer International Publishing, 2015.

\bibitem{munteanu2018total}
I.~Munteanu and M.~R{\"o}ckner.
\newblock Total variation flow perturbed by gradient linear multiplicative
  noise.
\newblock {\em Infin. Dimens. Anal. Quantum Probab. Rel. Top.}, 21(01):1850003,
  2018.

\bibitem{muskat1937flow}
M.~Muskat.
\newblock {\em The flow of homogeneous fluids through porous media}.
\newblock McGraw-Hill, 1937.

\bibitem{sixou2014stochastic}
B.~Sixou, L.~Wang, and F.~Peyrin.
\newblock Stochastic diffusion equation with singular diffusivity and
  gradient-dependent nosie in binary tomography.
\newblock {\em J. Phys.: Conf. Ser.}, 542(1):012001, 2014.

\bibitem{2018arXiv180307005T}
J.M. {T{\"o}lle}.
\newblock {Stochastic evolution equations with singular drift and gradient
  noise via curvature and commutation conditions}.
\newblock {\em ArXiv e-prints}, 2018, 1803:07005.

\bibitem{vazquez2006smoothing}
J.L. V\'azquez.
\newblock {\em Smoothing and decay estimates for nonlinear diffusion equations:
  Equations of porous medium type}.
\newblock Oxford University Press, 2006.

\bibitem{vazquez2007porous}
J.L. V\'azquez.
\newblock {\em The porous medium equation: mathematical theory}.
\newblock Oxford University Press, 2007.

\bibitem{wang2015filtered}
L.~Wang, B.~Sixou, and F.~Peyrin.
\newblock Filtered stochastic optimization for binary tomography.
\newblock In {\em 2015 IEEE 12th ISBI}, pages 1604--1607, 2015.

\end{thebibliography}

\end{document}